\title{Stability in the homology of Deligne--Mumford compactifications}
\thanks{2020 \emph{Mathematics Subject Classification}  14H10, 16G20}
\thanks{ \emph{Keywords:} moduli of curves, homological stability, representations of categories}
\thanks{The author is partially supported by NSF-Grant No. DMS-1903040.}
\date{\today}
\author{Philip Tosteson}
\address{Department of Mathematics, University of Chicago, Chicago, IL}
\email{\href{mailto:ptoste@math.uchicago.edu}{ptoste@math.uchicago.edu}}
\urladdr{\url{https://math.uchicago.edu/~ptoste/}}
\begin{document}

\begin{abstract}
   		Using the theory of $\FSop$ modules, we study the asymptotic behavior of the homology of $\bMgn$,  the Deligne--Mumford compactification of the moduli space of curves, for $n\gg0$.   An $\FSop$ module is a contravariant functor from the category of finite sets and surjections to vector spaces.    Via maps that glue on marked $\bP^1$'s, we give the homology of $\bMgn$ the structure of an $\FSop$ module  and bound its degree of generation. 
   		   As a consequence, we prove that the generating function $\sum_{n}  \dim(H_i(\bMgn)) t^n$ is rational, and its denominator has roots in the set $\{1, 1/2,  \dots,  1/p(g,i)\},$  where $p(g,i)$ is a polynomial of order $O(g^2 i^2)$.   We also obtain restrictions on the decomposition of the homology of $\bMgn$ into irreducible  $\bS_n$ representations.   
\end{abstract}
	\maketitle

%%%% SURJECTIONS AND DELIGNE MUMFORD COMPACTIFICATIONS	
	\section{Introduction}
	  		In this paper we study $H_i(\bMgn, \bQ)$, the homology of the Deligne--Mumford moduli space of  stable marked curves,  from the point of view of representation stability.    The space $\bMgn$ is a natural compactification of  the moduli space of smooth curves with $n$ marked  points,  obtained by allowing families of smooth curves  to degenerate to singular curves with double points.   The symmetric group $\bS_n$ acts on $\bMgn$ by relabeling the marked points, so that if we fix $i$ and $g$  we obtain a sequence of symmetric group representations $n \mapsto H_i(\bMgn,  \bQ)$.   
	 
	Our aim is to understand the asymptotic behavior of these $\bS_n$ representations for $n \gg 0$.     
	  		  The following theorem gives applications of our main result.
	  	 \begin{thm}\label{hilbert}
		Let $i,g \in \bN$, and let  $C = 8 g^2 i^2 + 29 g^2 i + 16 g i^2 + 21 g^2 + 10 g i - 6 g.~$
 Then the following hold.
		\begin{enumerate}
			\item   The generating function for the dimension of $H_i(\bMgn, \bQ)$ is rational and takes the form $$\sum_n \dim H_i(\bMgn, \bQ) t^n = \frac{  f(t)}{\prod_{j = 1}^{C} (1 - j t)^{d_j} }$$for some polynomial $f(t)$ and $d_j \in \bN$.  
			In particular, there exist polynomials  $p_1(n), \dots, p_{C}(n)$   such that for $n \gg 0$  we have $$ \dim H_i(\bMgn,\bQ) = \sum_{j=1}^C  p_{j}(n)  j^n.$$
			\item  Let  $\lambda$ be an integer partition of $n$. If the irreducible $\bS_n$ representation  corresponding to $\lambda$ occurs in the decomposition of $H_i(\bMgn, \bQ)$,  then $\lambda$ has length $\leq  C$.  In other words, the Young diagram of $\lambda$ has $\leq C$ rows. 
			\item  Let $\lambda = \lambda_1 \geq \lambda_2 \geq \dots \geq \lambda_C $ be an integer partition of $k$, and $\lambda + n$  be the partition  $\lambda_1 + n \geq \lambda_2 \geq \dots \geq \lambda_C$.  The multiplicity of $\lambda + n$ in $H_i(\widebar \cM_{g, n+k}, \bQ)$,   $$n \mapsto \dim \Hom_{\bS_{n+k}}(M_{\lambda + n},  H_i(\widebar \cM_{g, n + k}, \bQ)),$$ is bounded by a polynomial of degree $C - 1$.  
		\end{enumerate}
	\end{thm}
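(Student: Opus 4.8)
The plan is to derive all three statements from the generation bound established earlier in the paper: viewing $H_i(\bMgn)$ as an $\FSop$-module $M$ (so $M([n]) = H_i(\bMgn)$), $M$ is finitely generated in degrees $\le C := p(g,i)$. Thus there is a surjection, from a finite direct sum, $\bigoplus_{a} P(V_{d_a}) \to M$ with each $d_a \le C$ and $V_{d_a}$ an $\bS_{d_a}$-representation, where $P(W)$ denotes the free $\FSop$-module generated by an $\bS_d$-representation $W$ placed in degree $d$; explicitly $P(W)([n]) = \bQ[\mathrm{Surj}([n],[d])]\otimes_{\bQ[\bS_d]} W$, where $\mathrm{Surj}([n],[d])$ is the set of surjections $[n]\to[d]$ with its free right $\bS_d$-action. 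The first step is to record the two elementary facts that feed everything else: (i) since $\bQ[\mathrm{Surj}([n],[d])]$ is a free right $\bQ[\bS_d]$-module, any $\bS_d$-equivariant embedding $W\hookrightarrow\bQ[\bS_d]^{\oplus m}$ yields an $\bS_n$-equivariant injection $P(W)([n])\hookrightarrow\bQ[\mathrm{Surj}([n],[d])]^{\oplus m}$; and (ii) the Frobenius characteristic of $\bQ[\mathrm{Surj}([n],[d])]$ is the degree-$n$ part of $(h_1+h_2+h_3+\cdots)^{d}$, equivalently $\bigoplus\mathrm{Ind}_{\bS_{a_1}\times\cdots\times\bS_{a_d}}^{\bS_n}\mathbf 1$ over compositions $(a_1,\dots,a_d)$ of $n$ into $d$ positive parts, of total dimension $d!\,S(n,d)$.

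Part (2) then falls out: by Pieri's rule every Schur function occurring in $(h_1+\cdots)^d$ restricted to degree $n$ is indexed by a partition with at most $d$ rows, so every $\bS_n$-irreducible constituent of $\bQ[\mathrm{Surj}([n],[d])]$, hence of each $P(V_{d_a})([n])$, has at most $d_a\le C$ rows; since $M([n])$ is a quotient of a direct sum of these and $\bQ$ has characteristic $0$, every constituent $M_\lambda$ satisfies $\ell(\lambda)\le C$.

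For part (3), fix $\lambda$ of length $\le C$, write $N=n+k$, and note $M([N]) = H_i(\widebar{\cM}_{g,n+k})$. Combining the surjection onto $M$, the injections from (i), and character arguments in characteristic $0$, the multiplicity of $M_{\lambda+n}$ in $M([N])$ is at most $\sum_a m_a\langle s_{\lambda+n},\,(h_1+\cdots)^{d_a}|_{\deg N}\rangle$. Each of these inner products equals the number of semistandard tableaux of shape $\lambda+n$ with entries in $[d_a]$ using every value; such a tableau consists of a weakly increasing word of length $\lambda_1+n$ in the alphabet $[d_a]$ in its first row — of which there are $\binom{\lambda_1+n+d_a-1}{d_a-1}$, a polynomial in $n$ of degree $d_a-1\le C-1$ — together with an arbitrary filling of the remaining at most $k$ boxes, of which there are at most $d_a^{\,k}$. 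Hence the multiplicity is bounded, for every $n$, by the polynomial $\sum_a m_a d_a^{\,k}\binom{\lambda_1+n+d_a-1}{d_a-1}$ in $n$, which has degree $\le C-1$.

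Part (1) needs one further ingredient, the structure theory of finitely generated $\FSop$-modules. The free module $P(W)$ has Hilbert series $(\dim W)\,t^d/\prod_{j=1}^{d}(1-jt)$, whose poles lie among $\{1/j:j\le d\}$. The plan is to invoke the theorem — of Gröbner-basis type (in the spirit of Sam--Snowden), or via a local-cohomology / semi-induced-filtration presentation of $M$ modulo finite-length modules in terms of the explicit modules $\bQ[\mathrm{Surj}(-,[d])]\otimes_{\bS_d}V_d$ — that every finitely generated $\FSop$-module has rational Hilbert series whose denominator is a product of factors $(1-jt)$. Granting this, the surjection $\bigoplus_a P(V_{d_a})\to M$ gives $\dim M([n])\le\sum_a m_a\,d_a!\,S(n,d_a)\le(\textstyle\sum_a m_a)\,C^{\,n}$, so comparing exponential growth rates rules out any pole $1/j$ with $j>C$; thus $\sum_n\dim H_i(\bMgn)\,t^n = f(t)/\prod_{j=1}^{C}(1-jt)^{d_j}$, and the formula $\dim H_i(\bMgn)=\sum_{j=1}^{C}f_j(n)j^n$ for $n\gg0$ follows by partial fractions using $1/(1-jt)^{k}=\sum_n\binom{n+k-1}{k-1}j^n t^n$. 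The main obstacle is precisely this algebraic input: rationality here does not come for free from Noetherianity, since $\FSop$ has infinite global dimension, so establishing — or correctly locating in the literature — the Hilbert-series theorem for $\FSop$-modules with denominators controlled by the generation degree is the technical heart of the deduction.
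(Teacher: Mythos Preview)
Your deduction is essentially the same as the paper's in spirit---the paper simply states that Theorem~\ref{hilbert} follows from Theorem~\ref{Mgnbar} together with the results of Sam--Snowden on finitely generated $\FSop$-modules, without spelling out the representation-theoretic consequences---and your explicit arguments for (2) and (3) via Pieri and semistandard tableau counts are correct and more informative than a bare citation.

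There is one inaccuracy in your opening premise that you should repair. Theorem~\ref{Mgnbar} does \emph{not} assert that $M = H_i(\widebar\cM_{g,-})$ is itself generated in degree $\le C$; it only asserts that $M$ is a \emph{subquotient} of some $N$ generated in degree $\le C$. Hence you do not obtain a surjection $\bigoplus_a P(V_{d_a}) \to M$ with $d_a \le C$; you only obtain such a surjection onto $N$, with $M$ a subquotient of $N$. Fortunately all three of your arguments survive this change, since each uses only upper bounds: for (2) and (3) the irreducible constituents and their multiplicities in a subquotient of $N$ are bounded by those in $N$ and hence in $\bigoplus_a P(V_{d_a})$, and for (1) the dimension bound $\dim M([n]) \le (\sum_a m_a)\, C^n$ likewise holds for subquotients. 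Rationality of the Hilbert series of $M$ still requires that $M$ itself be finitely generated---which follows from Sam--Snowden's Noetherianity theorem for $\FSop$, possibly with generation degree larger than $C$---and then your growth-rate comparison correctly forces all poles into $\{1, 1/2, \dots, 1/C\}$. With this adjustment, your argument is complete and matches the paper's intended route.
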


	  		  \subsection{Main Result}\label{actiondef}
	  		  
	  		To establish Theorem \ref{hilbert} we use techniques from the area known as representation stability.  Namely, we extend the action of the symmetric groups on $H_i(\bMgn, \bQ)$ to the action of a category, and we prove that the homology groups are finitely generated under this action.  Finite generation then constrains the behavior of $H_i(\bMgn,\bQ)$ for $n \gg 0$.

		Let $\FS$ be the category of finite sets and surjections.  The objects of $\FS$  are natural numbers $n \in \bN$.  A map $f: m \to n \in \FS(m,n)$ is a surjection $f: [m] \to [n]$.  Here $[n] := \{1, \dots, n\}$.    An \emph{$\FSop$ module},  or an \emph{action of $\FSop$} on a sequence of vector spaces $V_n$,  is a functor from $\FSop$ to the category of vector spaces, denoted $n \mapsto V_n$.

  We give $\{ H_i(\bar \cM_{g,n})\}_{n \in \bN}$  the structure of an $\FSop$ module.  Concretely, this means that for every surjection $f:  [n] \to [m]$,  we define a map $$f^*:  H_i(\bMgn)  \leftarrow  H_i(\bar \cM_{g,m}),$$  such that $(f \circ g)^* =g^* f^*$ and $\id_{[n]}^* = \id_{H_i(\bar \cM_{g,m})}$.    
		  
		      We describe  $f^*$ in two special cases,  which suffice to determine it in general.   In these cases, $f^*$ is the map on homology induced by a map of spaces, $F^* :  \bMgn  \leftarrow  \bar \cM_{g,m}$.

		 \begin{enumerate} \item Let $f$ be a bijection.  Then  $F^*$  is the map that takes a stable marked curve $C$ and permutes its marked points by precomposing with $f$.  
		  
		  \item Let $f:  [n+1] \to [n]$ is the surjection defined by $f(n+1) = n$ and $f(i) = i$  otherwise.  Given $C \in \bMgn$, define   $F^*(C) $ to be the curve obtained by gluing a copy $\bP^1$ to the $n^{\rm th}$  marked point of $C$.  We mark $F^*(C)$ by keeping the marked points  $p_1, \dots, p_{n-1} \in C$  and marking two new points $p_n, p_{n+1} \in F^*(C) - C$.  Then $F^*(C) \in \overline \cM_{g,n+1}$ and $F^*: \overline\cM_{g,n} \to \overline \cM_{g,n+1}$ is the corresponding map.  
\end{enumerate}
To determine $f^*$ for an arbitrary $f: [n] \to [m]$, factor $f$ as a composition of permutations and surjections of the form $(2)$.  Proposition \ref{actionwelldef}, stated and proved in \S2, shows that this action is well-defined.   Combinatorially, the choice of such a factorization is related to the construction of a binary forest with $n$ leaves and $m$ roots.  Accordingly, in \S 2 we define a category of binary forests, $\BT \op$, which acts on the moduli spaces and induces the $\FSop$ action on homology.

	  		We say that an $\FSop$ module, $n \mapsto V_n$ is \emph{finitely generated in degree $\leq C$} if there is a finite list of classes $\{v_i \in V_{d_i}\}$ with $d_i \leq C$,   such that every $V_n$ is spanned by classes of the form $f^* v_i$.   Our main theorem states that the $\FSop$ module we construct is finitely generated.

	  	\begin{thm}\label{Mgnbar}
			Let $g,i \in \bN$.  Then the $\FS \op$ module  $$n \mapsto H_i(\bMgn, \bQ)$$  is a subquotient of an extension of $\FS \op$ modules that are finitely generated in degree  $\leq 8 g^2 i^2 + 29 g^2 i + 16 g i^2 + 21 g^2 + 10 g i - 6 g$.  
		\end{thm}

		   Theorem  \ref{hilbert} is a consequence of  Theorem \ref{Mgnbar} in combination with  results on  finitely generated $\FSop$ modules due to Sam and Snowden \cite{sam2017grobner}.

	\begin{rmk}[Relation to the Tautological Ring]
		The category $\FSop$ acts on the homology of $\bMgn$ through maps  that glue on copies of $\bP^1$ with two marked points.  These maps are a  small part of the full operadic structure on $H_\bdot(\bMgn)$ generated by all gluing maps.  The \emph{tautological ring}  is the subring of  $H^\bdot(\bMgn)$  generated by the image of all of the fundamental classes $[\bMgn]$ under gluing maps and cup products. 
		  In some sense, Theorem \ref{Mgnbar} says that for $i,g$ fixed, all of the classes in $H_i(\bMgn)$ are tautological \emph{``relative''} to a finite list of classes, using only maps that glue on copies of $\bP^1$ with $2$ marked points.
	\end{rmk}

	\subsection{Stability} 
	
	 		Although the dimensions  $\dim H_i(\bMgn,)$  grow exponentially in $n$, and therefore do not stabilize in a naive sense,  Theorem \ref{Mgnbar} implies that there exists a constant $N$ such that the $\bS_n$ representations $H_{i}(\bMgn,\bQ)$ are completely determined by the vector spaces $\{H_i(\mathcal  \bar\cM_{g,m},\bQ)\}_{m \leq N}$  and the algebraic structure they inherit from surjections $[m] \onto [m']$. 
	 		
	 		 For $r \in \bN$, let $\FS_r \op$ be the full subcategory of $\FSop$  spanned by sets of size $\leq r$.  We may restrict  an $\FS \op$ module  $M$  to an $\FS_r \op$  module, denoted $\Res_r M$.  The functor $\Res_r$ has a left adjoint $\Ind_r$, which takes an $\FS_r \op$ module to the $\FS \op$ module freely generated by it modulo relations in degree $\leq r$. 
	 		
	 		\begin{thm}\label{stability}
	 		 	Let $i, g \in \bN$. There exists $N \in \bN$ such that the natural map of $\FSop$ modules $$\Ind_N  \Res_N  H_i(\bar \cM_{g,-}, \bQ)  \to H_i(\bar \cM_{g,-},\bQ)$$   is an isomorphism.  In particular,  any presentation of the $\FS_N \op$ module $\Res_N  H_i(\bar \cM_{g,-},\bQ)  $  gives a presentation of the $\FSop$ module  $ H_i(\bar \cM_{g,-},\bQ)$. 
	 		\end{thm}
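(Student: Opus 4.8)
The plan is to deduce Theorem~\ref{stability} from Theorem~\ref{Mgnbar} together with the Noetherianity of the category of $\FSop$ modules, one of the results of Sam and Snowden \cite{sam2017grobner} already being used. Write $M$ for the $\FSop$ module $n \mapsto H_i(\bMgn,\bQ)$. First I would strengthen Theorem~\ref{Mgnbar} to the assertion that $M$ is finitely \emph{presented}. By Theorem~\ref{Mgnbar} it is a subquotient of a finitely generated $\FSop$ module; since $\FSop$ modules form a Noetherian category, submodules of finitely generated modules are finitely generated, and quotients always are, so any subquotient of a finitely generated module is finitely generated, and in particular so is $M$. Choose a surjection $F_0 \onto M$ with $F_0$ finitely generated and free; its kernel is a submodule of $F_0$, hence finitely generated, so it is the image of a map $F_1 \to F_0$ with $F_1$ finitely generated and free. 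This gives an exact sequence $F_1 \to F_0 \to M \to 0$ with $F_0, F_1$ finitely generated free $\FSop$ modules, and we fix $N \in \bN$ large enough that both are generated in degrees $\le N$.

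Next I would verify that, for this $N$, the counit $\Ind_N\Res_N M \to M$ is an isomorphism. The functor $\Res_N$ is exact (it is computed objectwise) and $\Ind_N$ is right exact (being a left adjoint), so $\Ind_N\Res_N$ is right exact. For a free $\FSop$ module $F$ generated in degrees $\le N$, the restriction $\Res_N F$ retains the generators and the relations among them imposed by $\FS_N \op$, and $\Ind_N$ returns the free module on those same generators; hence the counit $\Ind_N\Res_N F \to F$ is an isomorphism, naturally in $F$. Applying the right exact functor $\Ind_N\Res_N$ to $F_1 \to F_0 \to M \to 0$ and comparing with the original sequence via the counit produces a commutative diagram with exact rows whose two left-hand vertical maps are isomorphisms; comparing cokernels then forces $\Ind_N\Res_N M \to M$ to be an isomorphism. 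The last sentence of the theorem is immediate from this: $\Res_N M$ is finitely presented (its graded pieces are finite dimensional, as $\bMgn$ is proper), and for any presentation $Q_1 \to Q_0 \to \Res_N M \to 0$ by finitely generated free $\FS_N \op$ modules, applying the right exact functor $\Ind_N$ and using $\Ind_N\Res_N M \cong M$ yields a presentation $\Ind_N Q_1 \to \Ind_N Q_0 \to M \to 0$ of the $\FSop$ module $M$.

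The only genuinely non-formal input is Noetherianity of $\FSop$; the rest is bookkeeping with the $(\Ind_N,\Res_N)$ adjunction, and I do not expect a serious obstacle. Two points nevertheless want care. First, in contrast to quotients, a \emph{sub}module of a module generated in degrees $\le p(g,i)$ may require generators of higher degree, so this argument yields $N$ only abstractly, with no bound in terms of $p(g,i)$; since the theorem asserts merely the existence of $N$, this is harmless. Second, the middle paragraph rests on the naturality statement that $\Ind_N\Res_N$ restricts to the identity functor on free $\FSop$ modules generated in degrees $\le N$ --- this is the formal fact that makes a presentation of $\Res_N M$ transportable through $\Ind_N$, and it is the one place where I would take care in writing out the details.
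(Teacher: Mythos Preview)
Your proposal is correct and follows exactly the route the paper indicates: the paper's entire proof is the one-line remark that Theorem~\ref{stability} follows from Theorem~\ref{Mgnbar} together with the Noetherianity of $\FSop$ modules from \cite{sam2017grobner}, and what you have written is a careful unpacking of that deduction via a finite presentation and the $(\Ind_N,\Res_N)$ adjunction. Your caveat that $N$ is obtained only abstractly (with no explicit bound in terms of $p(g,i)$) is also in line with the paper, which raises the question of an effective bound on $N$ in \S\ref{furtherquestions}.
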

	 		
	 		\begin{rmk} Note that  $\FS_r$ is a finite category and $(\Ind_r M)_n$  can be described as a colimit   $$(\Ind_r M)_n = \colim_{ m \in (n/\FS_r)\op} M_m,   $$ where $n/\FS_r$ denotes the over-category.    Thus, Theorem \ref{stability} says that  $H_i(\bar \cM_{g,n})$  is determined by a finite amount of algebraic data.   \end{rmk}
	 		 
	 		 Theorem \ref{stability} follows from Theorem \ref{Mgnbar} and a Noetherianity result due to Sam and Snowden \cite{sam2017grobner}.

\begin{notation} For the remainder of the paper, all homology and cohomology will be implicitly taken with $\bQ$ coefficients. \end{notation}

\subsection{Relation to other work}

		Our work is motivated by the approach to representation stability  introduced by Church, Ellenberg and Farb, which  uses modules over $\FI$, the category of finite sets and injections  \cite{CEF}.  
		The theory of $\FI$ modules has been used by Jim\'enez Rolland to study the homology of $\Mgn$ \cite{rolland2013cohomology},   and by Maya Duque and Jim\'enez Rolland  to study the real locus of $\bMon$  \cite{rolland2015representation}.    Because the homology of $\bMgn$ grows at an exponential rate, it cannot admit the structure of a finitely generated $\FI$ module, and so a larger category is needed to control the homology of the full compactification.

		Using  an explicit presentation of the cohomology ring $H^\bdot(\bMon)$  given in  \cite{etingof2005cohomology},   Sam defined an action of $\FS \op$ on the cohomology of $\bMon$, and proved that it was finitely generated.  Our work was motivated by his suggestion that there could exist a finitely generated $\FSop$ action on the cohomology of $\bMgn$ for general $g$.
		
		  Sam and Snowden showed that $\FSop$ is Noetherian (submodules of finitely generated modules are finitely generated),  and described the Hilbert series of finitely generated $\FSop$ modules \cite{sam2017grobner}.  We use their results to deduce concrete implications from Theorem \ref{Mgnbar}.\\
		 Proudfoot and Young have also used $\FSop$ modules to study the intersection cohomology of a space closely related to $\bMon$ \cite{proudfoot2017configuration}.  The $\FSop$ module they construct appears similar to our construction in the case $g = 0$.    The statement of our Theorem \ref{hilbert}  parallels their  Theorem 4.3.  
		
		 In order to produce non-tautological classes, Faber and Pandharipande \cite{faber2011tautological} established restrictions on the $\bS_n$ representations that appear in the tautological ring, which resemble the restrictions on  $\bS_n$ representations we obtain in Theorem \ref{hilbert}.  Our restrictions on representations are weaker, but they hold for all cohomology classes.  This bounds the effectiveness of Faber and Pandharipande's method for producing non-tautological classes. 
	
		Kapranov--Manin \cite{kapranov2001modules} observed that $\bigoplus_{i,n} H_i(\bMgn)$ is a right module over the hypercommutative operad.  This algebraic structure extends the action of $\FSop$ on $H_i(\bMgn)$ for fixed $i$.  
	
\subsection{Acknowledgements}
				We thank Steven Sam for sharing his work on the cohomology of $\bMon$. % suggesting the existence of a finitely generated $\FSop$ action on $H^i(\bMgn)$.  
				 We also thank Dan Petersen, John Wiltshire-Gordon and Andrew Snowden for helpful conversations.   We also thank the anonymous referee for suggestions that significantly improved the quality of this paper.

\subsection{Heuristic for Theorem \ref{Mgnbar}}\label{heuristic}
		The following is a heuristic argument that illustrates the ideas involved in the proof of Theorem \ref{Mgnbar}. The argument uses notions that we introduce later, and it is mathematically independent from the rest of the text.  Readers may wish to skip this subsection on a first reading.
		
		 We stratify $\bMgn$ by dual graph $G$. The Borel--Moore homology spectral sequence associated to this stratification bounds the homology of $\bMgn$ in terms of the homology of the strata, $H_i^{\rB \rM} (\cM_G)$.   We wish to show that we only need classes from finitely many strata $\cM_G$ in order to generate all of the classes.   We say that a class in $H_i(\overline \cM_{g,n})$ is \emph{pushed forward from lower degree} if it is  a linear combination of classes of the form $f^* c$,  where $f : [n] \to [n-1] \in \FS(n,n-1)$.  

The stratum $\cM_G$  is a quotient of a product of moduli spaces $\prod_{v \in G} \cM_{g(v),n(v)}$.
		  %Since the homology of $\bMgn$ is pure, it suffices to consider the pure Borel--Moore homology of the strata:  $H_i^{\rB\rM}(\cM_G)$.  
 By fibering $\Mgn$ over $\cM_{g,1}$  we show that the Borel--Moore homology of $\Mgn$ vanishes for $n > i +3$, thus only strata $\cM_G$  for which $\sum_v {\rm val}(v) - 3  \leq i$ contribute to $H_i(\bMgn)$. 
		   Thus for $G$ ranging over all graphs whose strata contribute classes to $H_i(\bMgn)$,  the number of vertices of $G$ that  have valence $>3$ and genus $> 0$  is bounded by a function of $g$ and $i$.  So as $n \to \infty$  the number of trivalent genus $0$ vertices of $G$ must increase.
		  
		     We say that a stable graph  $H$ \emph{has an external $\rY$} if it has a genus $0$ trivalent vertex $v$ that is adjacent to two external edges.  The action of $\FSop$ on curves corresponds to gluing trivalent vertices on graphs.  Thus if $H$ has an external $\rY$, then every class $c \in H_i(\cM_H)$  is pushed forward from lower degrees. 

Similarly  if $G$ has two adjacent trivalent genus $0$ vertices $v_1, v_2$ such that each $v_i$ has an external edge, the WDVV relation shows that the classes from $\cM_G$ are homologous to classes from $\cM_H$, where $H$ has an external $\rY$. T hus classes from $\cM_G$ are also pushed forward from lower degree.   

Therefore, to prove finite generation, it is enough to show  that when  the number of trivalent genus $0$ vertices of $G$ is large then either  $(1)$  $G$ has an external $\rY$, or $(2)$ $G$ has two adjacent trivalent genus $0$ vertices $v,v'$, each with an external edge.  Each  trivalent vertex with \emph{no} external edges contributes $1/2$ to $-\chi(G)$.  As the number of trivalent vertices increases,  the bound  $-\chi(G) \leq g-1$ implies thatone of these two possibilities must occur.

		 \subsection{Structure of the paper}  In\S \ref{action} we define the category of binary trees, $\BT \op$ and prove that $\FSop$  acts on $H_i(\bMgn)$.   

		  In formalizing the heuristic argument of \S \ref{heuristic}, we encounter the problem that $\FSop$ does not act on the Borel--Moore homology spectral sequence for the stable graph stratification. Since the category of binary trees, $\bB \bT \op$, which does act, is not known to be Noetherian,  we cannot deduce finite generation of $H_i(\overline \cM_{g,n})$  using the usual stable graph stratification. Therefore in \S \ref{stratification},  we define a coarsening of the stable graph stratification for such that $\FSop$ acts on the associated Borel--Moore homology spectral sequence.   

In \S \ref{graphcombinatorics},  we prove two lemmas  that correspond to the combinatorial part of the heuristic argument.   In \S 5, we review the WDVV relation, and the fact that $H_i^{\BM}(\cM_{g,n})$ vanishes for $n > i+3$.   Finally, in \S \ref{proof}, we combine the results from the previous sections to prove Theorem \ref{Mgnbar} and its corollaries.

 In the final section, \S \ref{furtherquestions}, we ask further questions which are motivated by our results.

			%In fact,  under the correspondence between operads and combinatorial categories, Getzler's operadic description of the homology of $\widebar \cM_{0,n}$ implies that it carries an action of $\FS \op$, and that this action comes from an action of $\bW_{{\rm Free}(\Sym^2)} \op$, the wiring category for the free operad on a commutative binary operation, at the level of spaces.  %More concretely $\cW_{{\rm Free}(\Sym^2)} \op$
	
	\section{The Action of $\FSop$}\label{action}

	 Let  $\bMgn$  be the Deligne--Mumford space of stable genus $g$ curves with $n$ distinct marked points.  The space $\bMgn$  parameterizes genus curves $C$  with distinct marked points $p_1, \dots, p_n \in C$, such that all of the singularities of $C$ are double points (also called nodal singularities), each marked point $p_i$ is smooth,  each genus $0$ component of $C$ contains at least $3$ marked or singular points,  and each genus $1$ component contains at least one marked or singular point.

		\begin{rmk} Since our results concern homology with rational coefficients,  we may work with either the homology of the coarse moduli space or the homology of the Deligne--Mumford stack.  For definiteness,  we will work with the moduli space defined over the complex numbers.  However our methods are algebraic, and should also apply to the $l$-adic cohomology of $\overline \cM_{g,n}$ over any algebraically closed field. \end{rmk} 

It will also be convenient for us to use the space $\overline{\cM}_{g,X}$, where $X$ is a finite set and the marked points of $C$ are labelled by the elements of $X$.  Let $[n] := \{1, 2, \dots, n\}$.  Then $\overline \cM_{g,n} = \overline \cM_{g,[n]}$.  % For $\widebar \cM_{0,n}$,  the $\FSop$ module is defined by letting $H_i(\bMon) = 0$ for $n = 1, 2$.  
	
Next we introduce the dual graph of a nodal curve $C \in \bMgn$,  an important combinatorial invariant associated to $C$.  
	\begin{defn} Let $C \in \bMgn$. The \emph{dual graph of $C$}, denoted $G_C$, is the following graph: 

\begin{itemize} \item   The vertices of $G_C$ consist of a vertex for each irreducible component of $C$, together with $n$ additional vertices labeled $1, \dots, n$.  The vertices of $G_C$ corresponding to the irreducible components will be called the \emph{internal vertices}, and the $n$ additional vertices will be called the \emph{external vertices}.

\item  For each singular (double) point of $C$, there is an edge that connects the internal vertices corresponding to the components of $C$ that contain the double point.  These components might be the same, in which case the edge is a loop.

\item There is an edge connecting the $k^{\rm th}$ external vertex to the internal vertex corresponding to the irreducible component of $C$ containing the $k^{\rm th}$ marked point.
\end{itemize}
   The \emph{genus}, $g(v)$, of an internal vertex, $v$, is the genus of its irreducible component.   The \emph{valence}, $n(v)$,  of an internal vertex is the number of edges that are adjacent to it.  
	\end{defn}

	 \subsection{Action of binary trees}
		Before passing to homology, the category $\FSop$ does not naturally act on $\bMgn$.  Instead, we construct an action of a category of binary trees.

\begin{defn} We define the \emph{category of binary trees}, $\BT$,  as follows. The objects of $\BT$ are natural numbers.  A morphism $F \in \BT(m,n)$ from $m$ to $n$ is a forest of binary rooted trees,  with the leaves labelled by $[m]$ and the roots labelled by $[n]$.   The composite of two morphisms $F_1 \in \BT(m,n)$ and $F_2 \in \BT(n,l)$ is the forest obtained by gluing the roots of $F_1$ to the leaves of $F_2$  using the labelling of both by $[n]$,  and erasing the resulting bivalent vertices.
		\end{defn}

\begin{ex}
The binary forests \begin{center} \includegraphics[scale = 2]{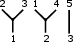} \hspace{.5in}  and  \hspace{.5in} \includegraphics[scale=2]{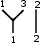}\end{center} define morphisms  from $5$ to $3$ and $3$ to $2$ respectively.  Their composite is the forest  \begin{center}  \includegraphics[scale=2]{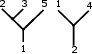}.\end{center}
\end{ex}

For a forest $F \in \BT(n,m)$, the function that takes each leaf to its root is a surjection $h_F: [n] \to [m]$.  The assignment $F \mapsto h_F$ defines a functor $\bB \bT \to \FS$ that realizes $\FS$ as a quotient of $\BT$.

We now define an action of $\bB \bT \op$  on $\bMgn$ by gluing on trees of marked projective lines in the following way.  

\begin{defn}
	Consider $F \in \BT(m,n)$.  We define a variety $L_F$ equipped with marked points $p_1, \dots, p_n$ and $q_1, \dots, q_n$ as follows.  The connected components of $L_F$ correspond to trees in the forest $F$.  For such a tree $T$, the component $L_T$ of $L_F$ is defined as follows:

\begin{itemize}
	\item If $T$ consists of a single root labeled by $r \in [m]$ joined to a single leaf labeled by $s \in [n]$	 joined to a single leaf labeled by $s \in [n]$, then $L_T = *$ and $p_r = q_s = *$.   

\item Otherwise, $L_T$ is the following stable curve.  The irreducible components of $L_T$ are all isomorphic to $\bP^1$ and are in bijection with the vertices of $T$ that are not roots or leaves.  Two of these components meet in a nodal singularity precisely when the corresponding vertices are joined by an edge.   The points $p_i$ and $q_j$ in $L_T$ correspond to the roots and leaves of $T$, respectively, and are chosen to be distinct from one another.  For a root (resp. leaf) labeled by $r \in [m]$ (resp. $s \in [n]$)  the point $p_r$ (resp. $q_s$) is a smooth point in the irreducible component of $T$ corresponding to the vertex of $T$ adjacent to $r$  (resp. $s$).    Since the non-root/leaf vertices have valence $3$, this defines these marked points up to isomorphism.  
\end{itemize} 
\end{defn}

\begin{defn}[Action of $\BT\op$]
		  Given a stable curve $C \in \bMgn$  and a labelled rooted forest  $F \in \bB \bT(m,n)$,  we define the stable curve $F^* C \in \widebar \cM_{g,m}$  to be  $L_F \sqcup_{[m]} C$. In other words $F^*C$ is the curve obtained by gluing  $L_F$ to $C$ along the marked points $\{p_{i}\}_{i \in [m]}$.  We use the marking of $L_F$ by $[n]$ to mark $F^*C$.   Since this construction may be performed in families, it corresponds to a map $F^*:  \widebar \cM_{g,m} \to \bMgn$  for each $F \in \BT(n,m)$, and these maps define an action of $\BT \op$ on $\bMgn$. 

Postcomposing with $H_i(-, \bQ)$,  we obtain a functor  $\BT\op \to {\rm Vect}_\bQ$,  given by $n \mapsto H_i(\Mgn)$. By convention, we take this functor to have the value $0$ in the cases $g = 0, n \leq 2$  and $g =1, n = 0$.  
\end{defn}

 The next proposition defines the $\FSop$ module structure on $H_i(\bMgn)$.
		\begin{prop}\label{actionwelldef}
	  			The functor $n \mapsto H_i(\bMgn)$  factors through the quotient $\BT\op \to \FSop$, hence defines an $\FSop$ module.
  		\end{prop} 
	\begin{proof}  Let $F_1$ and $F_2$  be two forests  inducing the same surjection $h: [n] \to [m]$.   There is a proper family of gluing maps from $\widebar \cM_{g,m}$ to $\bMgn$,  $$ \widebar \cM_{g,m} \times \left( \prod_{i  \in [m], ~ \# h \inv(i) > 1} \widebar \cM_{0, \# h \inv(i) + 1 } \right)  \to \bMgn.$$  The map $F_1^*$  (respectively $F_2^*$) is obtained from this family by evaluating at the point in the second factor defined by the tuple of connected components of  $L_{F_1}$ (respectively $L_{F_2}$). Since the second factor is connected, $F_1^*$ and $F_2^*$ induce the same map on homology. 
\end{proof} 

\begin{remark}
The proof of Proposition \ref{actionwelldef} implies that for any surjection $h: [n] \to [m]$ we have that $h^*: H_i(\overline \cM_{g,m}) \to H_i(\overline \cM_{g,n})$ equals the map induced on homology by the gluing map associated to any tuple of genus $0$ marked curves $(C_i \in \overline \cM_{0, \# h \inv(i) + 1 })$.
\end{remark}

	\section{Stable graph stratification}\label{stratification}

	\begin{defn}
			A \emph{stable graph $G$ of genus $h$ with $n$ external edges} consists of  the following data:

		\begin{itemize}
					\item  A connected graph $G$ and a labeling of a subset of the univalent vertices of $G$ by $1, \dots, n$.  The labeled vertices  are called the \emph{external vertices} and the unlabeled vertices are called \emph{internal vertices}.  The  edges adjacent to external vertices are called \emph{external edges}, and the remaining edges are  called \emph{internal edges}.   We will write $v \in G$ to denote that $v$ is an internal vertex of $G$.  

					\item  A function $g$ from the set of internal vertices to $\bN$, called the \emph{genus function}.  For an internal vertex $v \in G$, we say that $g(v)$ is the \emph{genus of $v$}.
		\end{itemize}
		This data is subject to the following conditions:
\begin{itemize} 
		\item  Each genus $0$ internal vertex is at least trivalent, and each genus $1$ internal vertex is at least univalent. 
		\item  There is an equality  $h^1(G) + \sum_{v \in G} g(v) = h$,  where $h^1(G) = \dim H^1(G)$ denotes the first betti number of $G$.  
\end{itemize}

	 When $G$ is a stable graph of genus $h$ and $n$ marked points, we define $g(G) := h$ and $n(G):= n$.  For each internal vertex $v \in G$, we write $n(v)$  for valence of $v$ and $e(v)$ for the number of self edges of $v$.  
	\end{defn}

\begin{Notation}  Each external vertex is adjacent to a unique external edge.  Because of this correspondence between external vertices and external edges, we may make the following abuse of notation.  When we say that $v$ is a vertex of $G$ without specifying whether it is internal or external,  we always mean that $v$ is an internal vertex of $G$.  Instead of referring to an external vertex, we will typically refer to its corresponding external edge.
\end{Notation}

\begin{ex}
The following stable graph  $G$ has genus $3$ and $1$ external edge.
\begin{center}\includegraphics[scale = 2]{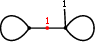} \end{center}
The vertex $v$ marked by $1$ has $g(v) = 1$.  The other two vertices have genus $0$. We do not decorate genus $0$ vertices in our depiction of stable graphs. 
\end{ex}

	Let $G$ be a stable graph, and suppose that  $G'$ is a quotient graph of $G$ such that none of the external edges of $G$ are collapsed in the projection $\pi: G \to G'$.  Then there is a natural stable graph structure on $G'$, defined as follows.  The external vertices  of $G'$  are the images of the external vertices  of $G$ under $\pi$, together with the induced labeling.  The genus of a vertex $w$ of $G'$  is  $\dim H^1(\pi\inv(G)) +  \sum_{v \in \pi\inv (w)} g(v)$.  We say that the stable graph $G'$ is a quotient of $G$.

	   We write $\Stab(g,n)$ for  the set of isomorphism classes of stable graphs of genus $g$ with $n$ external edges.  The set $\Stab(g,n)$ is naturally a poset. We say that  $G \leq G'$ if  $G'$  is a quotient of $G$.  We say that $G \prec G'$ if $G'$ is a quotient of $G$ by a single internal edge.

\begin{defn}
		A \emph{stratification} of a variety $X$ by a poset $P$ is a collection of closed subvarieties $Z(p) \subseteq X$ indexed by $p \in P$,   such that if $p \leq q$  then $Z(p) \subset Z(q)$.  The subsets $Z(p)$ are called \emph{closed strata}.   The \emph{stratum} corresponding to an element $p \in P$  is $S(p):= Z(p) - \bigcup_{q < p}  Z(q)$.      By construction we have that $Z(p)$ equals the set-theoretic union $\bigcup_{q \leq p} S(q)$.  
\end{defn}

 It is well known that the Deligne-Mumford compactification  $\bMgn$ admits a stratification by the poset of stable graphs $\Stab(g,n)$, defined as follows.   Given $C \in \overline \cM_{g,n}$, the dual graph of $C$ is naturally a stable graph, whose genus function assigns to each internal vertex $v$ the genus of the corresponding irreducible component of $C$.     For a stable graph $G \in \Stab(g,n)$, the stratum $\cM_G$ is the locus of stable curves whose dual graph is isomorphic to $G$  (as a stable graph).

We use a coarsening of the standard stratification by $\Stab(g,n)$,  which has the property that $\FSop$ acts on the associated Borel--Moore homology spectral sequence.  This stratification is defined in terms of the operation constructed in the next proposition
	   
	   \begin{prop}\label{closureprop}
	   	Let $G$ be a stable graph. Then there is a unique stable quotient $\widebar G \geq G$  such that no two distinct genus $0$ vertices of $\widebar G$ are connected by an edge, and $\widebar G \leftarrow G$  only identifies edges between genus $0$ vertices.  
	   \end{prop}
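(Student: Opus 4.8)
The plan is to construct $\widebar G$ by collapsing each maximal ``genus-$0$ block'' of $G$, and then to show that the two required properties force exactly this collapse. Let $G_0 \subseteq G$ be the subgraph whose vertices are the internal genus-$0$ vertices of $G$ and whose edges are the edges of $G$ joining two \emph{distinct} genus-$0$ vertices; thus loops, external edges, and edges meeting a positive-genus vertex are all excluded. Let $C_1, \dots, C_r$ be the connected components of $G_0$. First I would choose a spanning tree $T_i$ of each $C_i$, set $S = \bigsqcup_{i} E(T_i)$, and define $\widebar G := G/S$, the graph obtained by contracting the edges of $S$. Since $S$ is a forest all of whose edges lie between genus-$0$ vertices, no external edge is identified, so $\widebar G$ inherits the structure of a stable graph from $G$ as in the discussion preceding the proposition; and because each $T_i$ is a tree with all vertices of genus $0$, the image $u_i$ of $C_i$ in $\widebar G$ is a single genus-$0$ vertex, while the remaining vertices of $\widebar G$ are the vertices of $G$ not in $G_0$, with their original genera. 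In particular $\widebar G \geq G$.

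Next I would check that $\widebar G$ is stable and has the asserted property. Stability of the new vertices is a direct valence count: $n(u_i) = \sum_{v \in C_i} n(v) - 2(\#V(C_i) - 1) \geq 3\,\#V(C_i) - 2\,\#V(C_i) + 2 \geq 3$, using that every genus-$0$ vertex of $G$ is at least trivalent. For the property: the genus-$0$ vertices of $\widebar G$ are precisely the images of the genus-$0$ vertices of $G$, so if two distinct genus-$0$ vertices $u, u'$ of $\widebar G$ were joined by an edge, then lifting that edge along $G \to \widebar G$ would give a non-loop edge of $G$ between genus-$0$ vertices lying in two different components of $G_0$, which is impossible.

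For uniqueness, suppose $G/S'$ is another stable quotient satisfying the two conditions. The delicate point --- and the step I expect to be the main obstacle --- is to read the phrase ``only identifies edges between genus-$0$ vertices'' correctly. The literal reading, that $S'$ is merely \emph{some} set of edges between genus-$0$ vertices, is too weak to force uniqueness: for the theta graph (two genus-$0$ vertices joined by three parallel edges, $g = 2$, $n = 0$) one may contract one, two, or all three of the edges and obtain three non-isomorphic stable quotients, each of which has no two distinct genus-$0$ vertices adjacent. The condition must also forbid creating genus at a merged vertex; equivalently, $S'$ must be a \emph{forest} of edges between genus-$0$ vertices, equivalently the quotient $G \to G/S'$ factors as a chain of contractions of edges between distinct genus-$0$ vertices. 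Granting this, every edge of $S'$ lies in some $C_i$, and $S' \cap E(C_i)$ is a subforest of $C_i$; were it not spanning, some edge of $C_i$ not in $S'$ would join two distinct $S'$-classes, hence two distinct genus-$0$ vertices of $G/S'$, contradicting the property. Thus $S' \cap E(C_i)$ is a spanning tree of $C_i$ for every $i$, so $G/S'$ collapses each $C_i$ to a single genus-$0$ vertex carrying the same loops (the $\dim H^1(C_i)$ non-tree edges of $C_i$, together with the loops of $G$ at vertices of $C_i$) and the same edges to the rest of $G$ as $\widebar G$ does; hence $G/S' \cong \widebar G$, which completes the plan.
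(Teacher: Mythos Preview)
Your proof is correct and follows the paper's approach exactly: contract a spanning tree of each connected component of the genus-$0$ subgraph, and argue that any admissible quotient must do the same. You supply details the paper omits and correctly flag, via the theta graph, that uniqueness requires reading ``only identifies edges between genus $0$ vertices'' as contracting a \emph{forest} of such edges (equivalently, a chain of contractions of edges between \emph{distinct} genus-$0$ vertices); the paper's one-line uniqueness argument silently assumes this reading.
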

	   \begin{proof}
	   		Consider the subgraph consisting of genus $0$ vertices of $G$ and the edges between them.  Choose a minimal spanning tree $T_i$ for each connected component, and define $H$ to be the quotient of $G$ such that each $T_i$ is identified to a point.  
	   		
	   		Any other quotient of $G$ satisfying the hypotheses of the proposition must collapse a minimal spanning forest of the subgraph of genus $0$ vertices, and all quotients by minimal spanning forests are isomorphic.  
	   \end{proof}
	 Notice that $\widebar{ \widebar H} = \widebar H$.  

\begin{wrn} It is not true that if $G \leq H$, then  $\widebar G \leq \widebar H$.  For instance consider the graphs
\begin{center}\includegraphics[scale = 1.75]{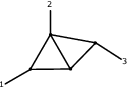}, \hspace{1in} \includegraphics[scale = 1.75]{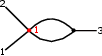}.\end{center}  They are comparable in the partial order,  but  the stable quotients associated to them by Proposition \ref{closureprop},
\begin{center} \includegraphics[scale = 1.75]{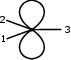}, \hspace{1in} \includegraphics[scale=1.75]{collapsedgraph},\end{center} are incomparable.  
\end{wrn} 
	   
	Let  $\rQ(g,n)$ be the set of stable graphs $G \in \Stab(g,n)$  such that no distinct genus $0$ vertices are connected by an edge.  The next proposition constructs a poset structure on $\rQ(g,n)$ and a stratification of $\bMgn$ by $\rQ(g,n)$.

	   		\begin{prop}\label{posetstrat}
	   			 There is a relation $\leq_Q$ on $\rQ(g,n)$  such that $(\rQ(g,n), \leq_Q)$ is a poset and the map  $H \mapsto \widebar H$ is a surjection of posets $\Stab(g,n) \to \rQ(g,n)$ .   This surjection induces a stratification of $\bMgn$, where the stratum corresponding to $G \in \rQ(g,n)$ is  $S(G) = \bigcup_{H,~ \widebar H = G}  \cM_H.$  
	   		\end{prop}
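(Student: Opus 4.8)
The plan is to let the geometry define the order on $\rQ(g,n)$ and to reduce the whole statement to one closure assertion. Surjectivity of $H \mapsto \widebar H$ is immediate: if $G \in \rQ(g,n)$ then the subgraph spanned by its genus $0$ vertices has no edges between distinct vertices, so every minimal spanning tree in Proposition \ref{closureprop} is a single vertex and $\widebar G = G$; hence $H \mapsto \widebar H$ restricts to the identity on $\rQ(g,n) \subseteq \Stab(g,n)$. Now declare $G' \leq_{\rQ} G$ iff $S(G') \subseteq \widebar{S(G)}$, the closure taken in $\bMgn$; note also that the sets $S(G)$ are disjoint and cover $\bMgn$, being the fibers of $C \mapsto \widebar{H(C)}$, where $H(C)$ is the dual graph of $C$. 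With $\leq_{\rQ}$ so defined, everything follows from the following claim, which is where the real work lies.

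\emph{Key Lemma: for each $G \in \rQ(g,n)$ the closure $\widebar{S(G)}$ is a union of strata $S(G')$.}

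Granting the Key Lemma, the remaining assertions are formal. Reflexivity of $\leq_{\rQ}$ is clear, and transitivity follows from taking closures twice. For antisymmetry, note that $\dim \cM_{H'} < \dim \cM_H$ whenever $H' < H$ in $\Stab(g,n)$ (the dimension of $\cM_H$ is $3g-3+n$ minus the number of internal edges of $H$, and a strict quotient removes at least one edge); so if we set $\dim S(G) := \max_{\widebar H = G}\dim \cM_H$ and take $G' \ne G$ with $S(G') \subseteq \widebar{S(G)} = \bigcup_{\widebar H = G}\widebar{\cM_H}$, then every $\cM_{H'}$ with $\widebar{H'} = G'$ satisfies $H' < H$ for some $H$ with $\widebar H = G$, whence $\dim S(G') < \dim S(G)$, and so $G' \leq_{\rQ} G$ together with $G \leq_{\rQ} G'$ forces $G = G'$. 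That $H \mapsto \widebar H$ is order preserving: if $H \leq H'$ then $\cM_H \subseteq \widebar{\cM_{H'}} \subseteq \widebar{S(\widebar{H'})}$, and since $\cM_H \subseteq S(\widebar H)$ while $\widebar{S(\widebar{H'})}$ is a union of strata and meets $S(\widebar H)$, it contains all of $S(\widebar H)$, i.e.\ $\widebar H \leq_{\rQ} \widebar{H'}$. Finally $\{S(G)\}_{G \in \rQ(g,n)}$ is a stratification indexed by $(\rQ(g,n),\leq_{\rQ})$: the Key Lemma gives $\bigcup_{G' <_{\rQ} G}\widebar{S(G')} = \bigcup_{G' <_{\rQ} G} S(G') = \widebar{S(G)} \setminus S(G)$, so $S(G) = \widebar{S(G)} \setminus \bigcup_{G' <_{\rQ}G}\widebar{S(G')}$ is locally closed; $\widebar{S(G)} = \bigcup_{G' \leq_{\rQ}G} S(G')$ is immediate from the definition of $\leq_{\rQ}$ and the Key Lemma; and compatibility with the standard stratification along $\Stab(g,n) \to \rQ(g,n)$ is built into the definition of $S(G)$.

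To prove the Key Lemma, write $\widebar{\cM_H} = \bigcup_{H' \leq H}\cM_{H'}$ (the standard stable-graph stratification), so that $\widebar{S(G)} = \bigcup_{\widebar H = G}\bigcup_{H' \leq H}\cM_{H'}$; since the $\cM_{H'}$ are disjoint, the Key Lemma is equivalent to the combinatorial statement: if $H' \leq H$ with $\widebar H = G$, and $H''$ is any stable graph with $\widebar{H''} = \widebar{H'}$, then $H'' \leq \widetilde H$ for some $\widetilde H$ with $\widebar{\widetilde H} = G$. Here $H'$ and $H''$ differ only in how the genus $0$ vertices of their common contraction $\widebar{H'}$ are re-expanded into genus $0$ trees, and the task is to transport the composite quotient $H' \to H \to G$ across this change of expansion. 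I would carry this out by describing $S(G)$ concretely as the image of a gluing map $\prod_{v \in G}\widebar{\cM}_{g(v),n(v)} \to \bMgn$ restricted to the open locus on which the curve pieces supplied by the genus $0$ vertices of $G$ are smooth, and then computing the closure of that image directly.

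\textbf{The main obstacle} is exactly this Key Lemma, and the source of the difficulty is the warning preceding the proposition: $\widebar{(\cdot)}$ is not monotone for the order on $\Stab(g,n)$, so the standard stratification cannot simply be pushed forward along $H \mapsto \widebar H$. Concretely, a quotient $H' \to H$ can collapse several genus $0$ vertices of $H'$, or cycles among them, into a single positive-genus vertex of $H$, so the genus $0$ subgraph of $H$ is not the image of that of $H'$; one must nonetheless show that $\widebar{S(G)}$ is saturated for the equivalence relation ``having the same genus $0$ contraction of the dual graph.'' The gluing-map description of $S(G)$ is, I expect, the cleanest way around this, since it avoids confronting the non-monotonicity head on.
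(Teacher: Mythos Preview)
Your reduction is clean and the formal deductions from the Key Lemma are correct, but the proposal stops exactly where the content is: you do not prove the Key Lemma, and your sketch of it contains an error. The set $S(G)$ is \emph{not} the image of the locus where the genus-$0$ pieces are smooth; it is the image of the locus where the genus-$\geq 1$ pieces are smooth while the genus-$0$ pieces are allowed to degenerate arbitrarily within $\widebar\cM_{e(v),n(v),0}$ (this is exactly the space $\widetilde S(G)$ that appears later in the paper). With the restriction as you wrote it, the image consists of curves whose dual graph refines $G$ only at the positive-genus vertices, which is the wrong set of $H$'s. Even after fixing this, ``computing the closure of that image directly'' is the whole problem---you must show that degenerating a genus-$\geq 1$ piece and then re-expanding the resulting genus-$0$ subtrees in a \emph{different} way still lands in the closure, and this is precisely where the non-monotonicity of $H\mapsto\widebar H$ that you flag becomes an actual obstacle rather than just a warning.

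The paper takes a different route that sidesteps the Key Lemma. It defines $\leq_\rQ$ combinatorially: $G'\leq_\rQ G$ if there is a chain $G'=G_0,\dots,G_k=G$ in which each step either collapses a single edge (a forward $\prec$ move) or \emph{un}-collapses a single edge between two distinct genus-$0$ vertices (a backward move). The backward moves are exactly what absorbs the non-monotonicity. Antisymmetry is then proved via a vector-valued invariant $s(J)\in\bN^{\oplus\infty}$ recording the number of edges between genus-$0$ and positive-genus vertices and the number of vertices of each positive genus; $s$ is constant under genus-$0$/genus-$0$ collapses and strictly increases under every other collapse, so a loop in $\leq_\rQ$ forces all moves to be of the first kind, whence $G=\widebar G=\widebar H=H$. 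Order-preservation of $H\mapsto\widebar H$ and closedness of $Z(G):=\bigcup_{H\leq_\rQ G}S(H)$ then drop out of the chain definition, and $S(G)=Z(G)\setminus\bigcup_{H<_\rQ G}Z(H)$ gives the stratification without ever needing to know that $\widebar{S(G)}$ is exactly a union of strata. Your geometric order would coincide with this one once your Key Lemma is established, but the paper's argument reaches the goal without it.
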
	 
	   		\begin{proof}
	   		  Let $G, H \in \rQ(g,n)$.  We say that  $G \leq_\rQ H$ if there is sequence of stable graphs $G = G_0, G_1, \dots, G_n = H$ such that  either $G_i \prec G_{i+1}$  or $G_i \succ G_{i+1}$ and $G_i$ is obtained from $G_{i+1}$ by collapsing an edge between two genus $0$ vertices. 
	   		  
	   		  By definition, $\leq_\rQ$ is reflexive and transitive.  To prove antisymmetry, suppose $G \leq_\rQ H$ and $H \leq_\rQ G$, and let $G = G_0, G_1, \dots, G_n = H$ be a sequence exhibiting that $G \leq_\rQ H$. 
	   		  
	   		    For a stable graph $J$, we let $s(J) = (s(J)_i)_{i \in \bN} \in \bN^{\oplus \infty}$   be the vector defined as follows.  The $0^{\rm th}$ entry is given by $$s(J)_0 := \sum_{v \in J, g(v) \geq 1} n(v).$$ The $i^{\rm th}$ entry for $i \geq 1$ is the number of vertices of genus $i$.   

	We totally order $\bN^{\oplus \infty}$ reverse-lexicographically, as follows.  If $(n_i), (m_i) \in \bN^{\oplus \infty}$, then $m_i = n_i = 0$ for $i \gg 0$.  So let $k = \max( \{i ~|~ m_i \neq n_i\})$.  We declare $(n_i) \leq (m_i)$ if and only if $n_k \leq m_k$.  

 Now suppose that  $J'$ is obtained from $J$ by collapsing an edge $e$.  If $e$ is an edge between  distinct genus $0$ vertices, then $s(J) = s(J')$.  

Next we claim that if $e$ is any of the following: \begin{enumerate} \item a self edge of a single genus $g$ vertex \item an edge between two distinct genus $\geq 1$ vertices, \item  an edge from a genus $\geq 1$ vertex to a genus $0$ vertex, \end{enumerate}  then $s(J) < s(J')$. In case $(1)$, the $(g+1)^{\rm th}$ entry is the largest entry such that $s(J')$ differs from $s(J)$  and we have $s(J')_{g+1} = s(J)_{g+1} + 1$.  In case $(2)$, suppose the vertices have genus $g_1, g_2$.  The largest entry where $s(J')$ differs from $s(J)$ is the $(g_1 + g_2)^{\rm th}$ entry.  Again the value of this entry increases by $1$.  In case $(3)$ let $n \geq 3$ be the valence of the genus $0$ vertex.  Then $s(J')_0 = s(J)_0 + n - 2$.  

 Thus we see that $s(G_i) \leq s(G_{i+1})$ for all $i$,  with equality if and only if $G_{i}$ and $G_{i+1}$ are related by collapsing an edge between distinct genus zero vertices.  Thus $s(G) \leq s(H)$, and by symmetry $s(H) \leq s(G)$.

	   		   Since $s(H) = s(G)$, for every $i$, the graphs $G_i$ and $G_{i+1}$ are related by collapsing an edge between distinct genus zero vertices.  Otherwise, we would have $s(G_i) < s(G_{i+1})$, contradicting $s(G) =s(H)$.  Thus  $\widebar G_i = \widebar G_{i+1}$ for all $i$, by Proposition \ref{closureprop}.  So $G =  \bar G = \bar H = H$, establishing antisymmetry.
	   		   
	   		   To see that $H \mapsto \widebar H$ is a map of posets, notice that if $H  \leq G$ then $H$ and $G$ are related by a sequence of edge collapses.  By definition the pairs $H$, $\widebar H$ and $G, \widebar G$ are related by a sequence of collapses between distinct genus zero vertices.  Composing these three sequences of edge collapses proves that $\widebar H \leq_Q \widebar G$.
	   		   
	   		   Finally, the surjection $\Stab(g,n) \to \rQ(g,n)$ shows that $$Z(G) := \bigcup_{ J \in \Stab(g,n), ~ \overline J \leq_\rQ G }  \cM_J$$ is a closed subset of $\bMgn$ and  $$S(G) =  Z(G) - \left( \bigcup_{H \in \rQ(g,n),~ H < G}  Z(H)  \right) = \bigcup_{\widebar J = G} \cM_J. $$
	   		\end{proof}
	   		
	   		\begin{rmk}
						The poset structure on $\rQ(g,n)$  constructed in the proof of Proposition \ref{posetstrat} is uniquely characterized as the \emph{minimal} poset structure  on $\rQ(g,n)$ such that the map $\Stab(g,n) \to \rQ(g,n)$ is a map of posets.  
				\end{rmk}

			\begin{defn}   We use   $\widebar \cM_{e ,n,0}$ to denote the moduli space of stable curves $C$ of genus $e$ and $n$ marked points, such that all of the irreducible components of $C$ have genus $0$.  For $G \in \rQ(g,n)$  define  $\widetilde S(G)$ to be the variety
	   			  \[ \widetilde S(G) := \prod_{v \in G, ~ g(v) = 0} \widebar \cM_{e(v),n(v), 0} \times    \prod_{v \in G, ~ g(v) \geq 1} \cM_{g(v), n(v)},\] Recall that $e(v)$ denotes the number of self edges of $v$. 
	   			      
			\end{defn}
			 There is a canonical map  ${\rm glue}: \widetilde S(G) \to S(G)$  given by gluing a tuple of curves $\{C_v\}_{v \in G} \in \widetilde S(G)$ together according to the combinatorics of the graph $G$.  More precisely, $\glue(\{C_v\})$ is the curve obtained from $\bigsqcup_{v \in G} C_v$ by quotienting by the relation that identifies pairs of marked points that correspond to the same edge of $G$.

\begin{defn} Let $G$ be a stable graph.   An \emph{automorphism of $G$} consists of a permutation of the vertices of $G$ and an oriented permutation of the edges of $G$, which preserve the graph structure and  the genus function, and fix the external vertices and edges. (See Remark \ref{automorphisms} below for an alternate definition).  Let $\Aut(G)$ be the group of automorphisms of $G$.

 Let $G \in \rQ(g,n)$. Let $\tilde  G$ be the stable graph obtained from $G$ by collapsing all of the self edges of the genus $0$ vertices of $G$.  Let $T \subset \tilde G$ be the image of the genus $0$ vertices of $G$ under the quotient map $G \to \tilde G$.  Let $A_G \subset  \Aut(\tilde G)$  be the group of automorphisms $f$ of $\tilde G$ such that $f(T) = T$.   
\end{defn}

\begin{ex}
Let $G_1, G_2$ be the graphs 
\begin{center}  
\includegraphics[scale=2]{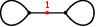}  \hspace{.2in} , \hspace{.2in} \includegraphics[scale=2]{autex2}
\end{center}
respectively.    Then we have that $\Aut(G_1) \iso    \bbZ/2 \ltimes (\bbZ/2 \times \bbZ/2)$ and $\Aut(G_2) \iso \bbZ/2 \times \bbZ/2$, whereas $A_{G_1} \iso \bbZ/2$ and $A_{G_2} = e$.  

Let $G_3$ be the following graph

\begin{center}
 \includegraphics[scale=2]{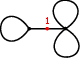}.   
\end{center}
Then $\Aut(G_3) \iso \bbZ/2 \times (\bbZ/2 \ltimes (\bbZ /2 \times \bbZ/2))$  and $A_{G_3} = e$.  
\end{ex}

\begin{rmk}\label{automorphisms} We can describe $\Aut(G)$ more formally as follows.  The data of the graph $G$ can be encoded as a set of half-edges, ${\rm Half}(G)$, a partition $p$ that records which half-edges are adjacent to the same vertex, and an involution $\sigma: {\rm Half}(G) \to {\rm Half}(G)$ that records which half-edges are glued together.  The half-edges fixed by $\sigma$ correspond to the edges that are adjacent to external vertices in our original encoding of $G$. Then $\Aut(G)$ is the subgroup of the permutation group of ${\rm Half}(G)$  consisting of permutations that preserve $p$ and $\sigma$, and which fix pointwise the set of half edges fixed by $\sigma$.  
\end{rmk}

The group $A_G$  acts on $\widetilde S(G)$, by permuting the factors according to the action  of $A_G$ on the vertices of $\tilde G$,  and relabeling the marked points according the action of $A_G$ on the edges of $\tilde G$.

	   		\begin{prop}	 \label{quotient}
	   			For $G \in \rQ(g,n)$, the map  $\widetilde S(G) \to S(G)$  induces an isomorphism  $\widetilde S(G)/A_G \iso S(G)$.  
	   		\end{prop}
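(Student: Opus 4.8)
The plan is to recognise $\widetilde S(G)\to S(G)$ as a restriction of the standard boundary gluing maps and then to carry out the automorphism bookkeeping this forces. I would begin by recalling the standard structure of the boundary of $\bMgn$: for any stable graph $H$ the gluing map $\xi_H\colon\prod_{v\in H}\widebar\cM_{g(v),n(v)}\to\bMgn$ (defined once one fixes an ordering of the half-edges at each vertex) is finite, restricts over $\cM_H$ to a surjection from $\prod_{v\in H}\cM_{g(v),n(v)}$ whose fibres are the orbits of $\mathrm{Aut}(H)$ acting by permuting half-edges, and carries the boundary of its source into the deeper strata $\cM_{H'}\subset\overline{\cM_H}$; thus $\cM_H\cong[\prod_{v\in H}\cM_{g(v),n(v)}/\mathrm{Aut}(H)]$. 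Applying this inside the closed substack of all--genus--zero curves also identifies $\widebar\cM_{e,n,0}=\bigsqcup_\Gamma\cM_\Gamma$, the union over connected stable graphs $\Gamma$ all of whose vertices have genus $0$, with first Betti number $e$ and $n$ labelled legs.

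The first step is to pin down the image. The canonical map sends a tuple $(C_v)$ to the curve glued along the edges of $G$; when $v$ has genus $0$ the curve $C_v$ is itself an all--genus--zero curve of genus $e(v)$, so its dual graph is a connected all--genus--zero graph with first Betti number $e(v)$ and $n(v)$ legs, and replacing each genus $0$ vertex of $G$ by this graph produces the dual graph $H((C_v))$ of the glued curve. Because $G\in\rQ(g,n)$ has no edge joining two distinct genus $0$ vertices, the inserted graphs are exactly the connected components of the genus--zero subgraph of $H((C_v))$, so contracting a spanning tree of each reproduces $G$ with its $e(v)$ self--edges at $v$; hence $\widebar{H((C_v))}=G$ by Proposition \ref{closureprop} and the image lies in $S(G)$. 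Conversely a curve with $\widebar{(\text{dual graph})}=G$ has exactly one genus--zero cluster for each genus $0$ vertex of $G$, together with its positive--genus components, and reading off these pieces exhibits it in the image; so the image is precisely $S(G)$.

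Next I would analyse the fibres. The map is $A_G$--invariant, since an element of $A_G$ only relabels half-edges compatibly with the graph structure and fixes the external ones, so it does not change the glued curve; it therefore remains to see that $\widetilde S(G)/A_G\to S(G)$ is a bijection. Stratify $\widetilde S(G)=\bigsqcup_{(\Gamma_v)}\widetilde S(G)_{(\Gamma_v)}$ according to the choice of a stratum $\cM_{\Gamma_v}\subset\widebar\cM_{e(v),n(v),0}$ at each genus $0$ vertex $v$, so that $\widetilde S(G)_{(\Gamma_v)}\cong[\prod_{u\in H}\cM_{g(u),n(u)}/\prod_v\mathrm{Aut}(\Gamma_v)]$ with $H=H((\Gamma_v))$ equal to $G$ with each genus $0$ vertex $v$ replaced by $\Gamma_v$. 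An isomorphism $H((\Gamma_v))\cong H((\Gamma'_v))$ must match the inserted blocks, and the permutation of the half-edges of $G$ that do not lie on genus $0$ self--edges which it induces belongs to $A_G$ (it preserves the partition and the involution and fixes the external half-edges, since isomorphisms of stable graphs preserve the labelled legs), while conversely $A_G$ carries each such tuple to another; so the $A_G$--orbits of tuples correspond to the isomorphism classes of stable graphs $H$ with $\widebar H=G$. For a fixed tuple one then has a short exact sequence
\[ 1\longrightarrow \prod_v\mathrm{Aut}(\Gamma_v)\longrightarrow \mathrm{Aut}\big(H((\Gamma_v))\big)\longrightarrow \mathrm{Stab}_{A_G}\big((\Gamma_v)\big)\longrightarrow 1, \]
the kernel being the automorphisms internal to the blocks. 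The \emph{crucial} point is that the flips and permutations of the $e(v)$ genus $0$ self--edges of $G$ at $v$ do not survive to the quotient: they are already absorbed into $\mathrm{Aut}(\Gamma_v)$, because a point of $\widebar\cM_{e(v),n(v),0}$ does not record its internal nodes — and this is exactly why $A_G$ is defined to fix those half-edges. Assembling the pieces, $\widetilde S(G)/A_G$ is the disjoint union over isomorphism classes $\widebar H=G$ of $\widetilde S(G)_{(\Gamma_v)}/\mathrm{Stab}_{A_G}((\Gamma_v))\cong[\prod_{u\in H}\cM_{g(u),n(u)}/\mathrm{Aut}(H)]=\cM_H$, i.e. $\widetilde S(G)/A_G=\bigsqcup_{\widebar H=G}\cM_H=S(G)$ as sets.

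Finally, to upgrade this set--theoretic bijection to an isomorphism: all the identifications above are induced by the single finite morphism $\widetilde S(G)\to\bMgn$ and are compatible with the closure relations among the strata $\cM_H$, so patching the stratum--by--stratum isomorphisms $\widetilde S(G)_{(\Gamma_v)}/\mathrm{Stab}_{A_G}\cong\cM_H$ yields an isomorphism $\widetilde S(G)/A_G\cong S(G)$ (alternatively, over coarse spaces $\widetilde S(G)/A_G\to S(G)$ is a finite bijective morphism, which one upgrades using normality; the argument runs verbatim over the Deligne--Mumford stacks, the automorphisms of the generic curve matching on the two sides). I expect the main obstacle to be the middle step: matching $A_G$--orbits of block tuples with isomorphism classes of $H$ and fitting $\mathrm{Aut}(H)$ into the displayed extension — in particular checking carefully that nothing is lost by working with $\widebar\cM_{e(v),n(v),0}$ rather than with a version recording the gluing points, which is precisely the feature of the construction responsible for $\FSop$ (and not a larger category) acting.
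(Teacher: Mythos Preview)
Your groupoid-level analysis (Steps 1 and 2) is essentially the paper's check on $\bC$-points, carried out in more combinatorial detail; the short exact sequence you write down is a clean way to organise the bookkeeping, and the observation about why $A_G$ omits the half-edges on genus~$0$ self-loops is exactly right.  So far this is fine, and indeed the paper's own first step is the same verification, phrased as an equivalence of groupoids.

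The gap is Step~3.  Neither of your two suggested upgrades works.  ``Patching stratum-by-stratum isomorphisms'' is not a valid argument: a morphism can restrict to an isomorphism on every stratum of a common stratification and still fail to be an isomorphism (the normalisation of a cuspidal curve already shows this).  And the normality route fails because $S(G)$ is typically \emph{not} normal: the paper's own deformation-theory computation shows that the formal neighbourhood of a point of $S(G)$ inside its $\Ext^1$ is a \emph{union} of linear subspaces, one for each spanning forest of the genus-zero subgraph of the dual graph, and such unions are not normal as soon as there is more than one such forest.  The same description applies to $\widetilde S(G)$, so the source of your quotient map need not be normal either.  A finite bijection between non-normal spaces carries no automatic isomorphism statement.

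This is precisely why the paper takes a different route: after the $\bC$-points check it computes the formal neighbourhoods on both sides explicitly (via $\Ext^1(\Omega_C(\sum p_i),\cO_C)$ and the local-to-global decomposition at the nodes), matches the two unions of linear subspaces, and then invokes a Hilbert-scheme cover to reduce to the statement ``a map of finite-type $\bC$-schemes which is an isomorphism on $\bC$-points and on all formal neighbourhoods is an isomorphism''.  Your argument would become complete if you inserted exactly that deformation-theoretic comparison in place of the normality/patching appeal; without it the proof does not close.
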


	   		\begin{proof}
	   		We first check the map is an isomorphism on $\bC$ points,  then show it is an isomorphism in formal neighborhoods by deformation theory.   Since $S(G)$ is a locally closed subvariety of $\bMgn$, this suffices to show that the map is an isomorphism.  
	   		
	   			 On $\bC$ points  $\widetilde S(G) (\bC)$  is the set of isomorphism classes of collections of marked curves  $\{ C_v\}_{v \in G}$,  such that if $g(v) \geq 1$,  then  $C_v$ is smooth of genus $g(v)$  and if $g(v) = 0$, then $C_{v}$  is has genus $e(v)$ and each irreducible component of $C_{v}$ has genus $0$.   Similarly,  $S(G)(\Spec \bC)$ is the set of isomorphism classes of stable marked curves $C$ whose dual graph $H$ has $\widebar H = G$.  
	   			 
	   			 An element $\tau \in A_G$  acts on $\widetilde S(G) ( \bC)$,  $\tau ( \{C_v\}) = \{ C_{\tau v}\}$, where the marked points are relabeled according to the action of $\tau$ on the edges of $G$.   There are canonical isomorphisms  between $ {\rm glue}(\{C_v\})$ and  ${\rm glue}( \{C_{\tau v}\})$; any curve in $S(G)(\bC)$ is glued from a curve in $\tilde S(G)(\bC)$;  and any isomorphism ${\rm glue}(\{C_v\}) \to {\rm glue}(\{C'_v\})$ factors uniquely as a relabeling of the components of $C_v$  by  $\tau \in A_G$, and isomorphisms between the components $f_v: C_{\tau v} \to C'_{\tau v}$.  Thus  $S(G)(\bC)$ is the quotient of $\widetilde S(G)(\bC)$  by $A_G$.  
	   			 
	   			Next we check that the map is an isomorphism in formal neighborhoods.  Let  $\{C_v\}_{v\in G} \in  \tilde S(G)( \bC)$,  and let $C := \glue(\{C_v\})$  have dual graph $H$.   We show that $$\tilde S(G)/A_G  \to S(G)$$ induces an isomorphism between the formal neighborhood of $\{C_v\}_{v \in G}$ and the  formal neighborhood of $C$.
	   			
	   			Let $\{p_i\}_{i= 1}^n$ be the marked points of $C$.   From the deformation theory of marked stable curves, the formal neighborhood of $C$ is isomorphic to the completion at $0$ of $X$,  where $$X \subset  \Ext^1(\Omega_C(\sum_i p_i),  \cO_C) $$  is the following union of linear subspaces.  There is a subspace for each spanning forest of the subgraph  $H_0 \subset H$ consisting of genus $0$ vertices and edges between them.  The subspace corresponding to the spanning forest $F$  is  $$  \ker \left(   \Ext^1(\Omega_C(\sum_i p_i),  \cO_C) \to  \prod_{e \in \rm {Edges}(H - F)} \Ext^1(\Omega_{\hat C_e}, \cO_{\hat C_e})\right),  $$	where $\widehat C_e$ is the formal neighborhood of the double point $e \in C$ corresponding to $e$.    
	   			
	   			 Similarly,  the formal neighborhood of $\{C_v\}_{v \in G}$  is isomorphic to the completion of $\tilde X$ at $0$ where $$\tilde X \subset \prod_{v \in G} \Ext^1( \Omega_{C_v}(\sum_{u_v } q_{u_v}), \cO_{C_v})$$ is a union of linear subspaces  (the sum  is over edges $u_v$ adjacent to $v$).  Again, $\tilde X$ is a union of subspaces, one for each spanning forest of $H_0$.  The subspace corresponding to $F$ is $$ \ker\left(  \prod_{v \in G  }   \Ext^1(\Omega_{C_v}(\sum_{u_v}  q_{u_v}), \cO_{C_v})   \to  \prod_{e \in {\rm Edges}(H_0 - F)}\Ext^1(\Omega_{\hat C_e}, \cO_{\hat C_e})  \right).$$
	   			 From these identifications,  and the fact that $$\prod_{v \in G} \Ext^1(\Omega_{C_v}(\sum_{u_v} q_{u_v}), \cO_{C_v}) =   \ker\left(\Ext^1(\Omega_C(\sum_i p_i), \cO_C) \to  \prod_{e \in {\rm Edges}(H - H_0)} \Ext^1(\Omega_{\hat C_e}, \cO_{\hat C_e})\right)$$ it follows that the map is an isomorphism on formal neighborhoods.
	   		\end{proof}

Next, we describe how $\BT\op$ acts on the poset $\rQ(g,n)$ and on the topological spaces $\bigsqcup_{G \in \rQ(g,n)} S(G)$  and  $\bigsqcup_{G \in \rQ(g,n)} \tilde S(G)$.  These actions induce an action on Borel--Moore homology spectral sequences.
	  
\begin{defn} 	The category $\BT \op$ acts on the poset $\Stab(g,n)$  by gluing on trees.  More precisely  given $F \in \BT(n,m)$ there is a map $\Stab(g,m) \to \Stab(g,n)$,  denoted $G \mapsto F^*G$, where $F^* G$ is the stable graph obtained from $G$ by gluing $F$ to $G$ using the identification of the external vertices of $G$ with $[m]$, and erasing bivalent vertices.  The action of $\BT\op$ on $\Stab({g,n})$ induces an action of $\BT\op$ on $\rQ(g,n)$ in the sense that there is a unique map $F^*: \rQ(g,n) \to \rQ(g,n)$  such that the diagram
\begin{center}
 \begin{tikzcd}
{{\rm Stab}(g,n)} \arrow[d, two heads] \arrow[r, "F^*"] & {{\rm Stab}(g,m)} \arrow[d, two heads] \\
{{\rm Q}(g,n)} \arrow[r, "F^*"]                         & {{\rm Q}(g,m)}                        
\end{tikzcd}
\end{center}
commutes.   Here the two vertical maps are given by $G \mapsto \overline G$.   
\end{defn}

\begin{ex}
Consider the following forest  $F \in \BT(6,3)$ and  stable graph $G$.
\begin{center}
\includegraphics[scale=2]{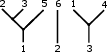} \hspace{1in} \includegraphics[scale=2]{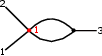}
\end{center}
We may consider $G$ as either an element of $\Stab(2,3)$ or $\rQ(2,3)$.   In these two cases,  $F^* G$ is given respectively by the following two graphs
\begin{center}
\includegraphics[scale=2]{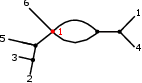}, \hspace{.5in} \hspace{.5in}  \includegraphics[scale=2]{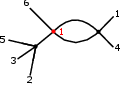}.
\end{center}
 
\end{ex}

\begin{defn}   	The category $\BT\op$ acts on   $\bigsqcup_{G \in \rQ(g,n)} S(G)$ and $\bigsqcup_{G \in \rQ(g,n)}\tilde S(G)$, as follows.    For every $F \in \BT(m,n)$, the gluing map $F^*:  \overline \cM_{g,n} \to  \overline \cM_{g,m}$ induces a map $S(G) \to  S(F^*G)$ and a map $\tilde S(G) \to \tilde S(F^*G)$ for every $G \in \rQ(g,n)$.  Taking the disjoint union of these maps we obtain the desired action.   
\end{defn}

		We let $H_i^{\rm BM}(-)$ denote Borel--Moore homology, and $H^i_c(-)$ denote compactly supported cohomology.  For any finite type variety $X$, we have that $H_i^{\rm BM}(X) = H^i_c(X)^*$.  The variety $\bMgn$ is compact, hence $H_i(\bMgn) = H_i^{\rm BM}(\bMgn)$.

There is an increasing filtration of $\bMgn$ by closed subsets, $Z_i := \bigcup_{G, ~\dim S(G) \leq i}  S(G)$ for  $i \in \bN$.  These closed subsets define a filtration on the Borel--Moore  chains of $\bMgn$, inducing a Borel-Moore homology spectral sequence  
$$E^1_{p,q} =  \bigoplus_{G \in \rQ(g,n), ~\dim S(G) = p}  H_{p+q}^{\BM}(   S(G))  \implies H_{p+q}^{\BM}(\bMgn).$$
This spectral sequence is dual to a spectral sequence for compactly supported cohomology,  which is used more frequently, see for instance Equation 3 of \cite{petersen2017spectral}.  
The next proposition states that  the isomorphism of Proposition \ref{quotient} is compatible with the action of $\FSop$ and the spectral sequence of this stratification.

	   		\begin{prop}\label{compatible}
	   			  The actions of $\BT \op$  on  $\bigsqcup_{G \in \rQ(g,n)} S(G)$ and $\bigsqcup_{G \in \rQ(g,n)}\tilde S(G)$  induce an $\FSop$ module structure on Borel--Moore homology  such that    $$\bigoplus_{G \in \rQ(g,-)}  H_\bdot^{\BM}( \tilde  S(G)) \to \bigoplus_{G \in \rQ(g,-)}  H_\bdot^{\BM}(   S(G))$$ is a surjection of $\FSop$ modules, and the Borel--Moore homology spectral sequence $$\bigoplus_{G \in \rQ(g,-)}  H_\bdot^{\BM}(   S(G))  \implies H_{\bdot}(\overline \cM_{g,-})$$ is a spectral sequence of $\FSop$ modules. %and %$ \bigoplus_{G \in \rQ(g,n)}  H_\bdot^\BM(  S(G))) \to \bMgn$ are morphisms of $\FSop$ modules. 
	   		\end{prop}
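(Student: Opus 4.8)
The plan is to first upgrade the geometric $\BT\op$‑action to an action on every page of the spectral sequence, and then to show that it descends to $\FSop$; the descent is where the content lies, and it is the reason for working with the coarse stratification by $\rQ(g,n)$ in the first place. The spectral sequence itself arises from filtering $\bMgn$ by the closed substacks $\overline{S(G)}$: concretely, choose a linear refinement $G_1,G_2,\dots$ of the poset $\rQ(g,n)$ and set $Y_k=\bigcup_{i\le k}\overline{S(G_i)}$, so that $Y_k\setminus Y_{k-1}=S(G_k)$ and the resulting Borel--Moore spectral sequence has $E^1=\bigoplus_G H^{\BM}_\bdot(S(G))$ abutting to $H_\bdot(\bMgn)$, the $E^1$ page being independent of the refinement. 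The gluing map $F^*$ attached to a morphism $F\in\BT([m],[n])$ is a morphism of proper Deligne--Mumford stacks, hence proper, and by the discussion preceding the statement it sends each stratum $S(G)$ into the stratum $S(f^*G)$ compatibly with the induced poset map $G\mapsto f^*G$; so $F^*$ sends $\overline{S(G)}$ into $\overline{S(f^*G)}$, and after choosing the linear refinements of $\rQ(g,n)$ and $\rQ(g,m)$ compatibly (possible since $G\mapsto f^*G$ is a map of posets) it respects the filtrations and induces a morphism of spectral sequences, functorially in $F$; on $E^1$ this is exactly the map induced by $S(G)\to S(f^*G)$. All of this applies verbatim with $\tilde S$ in place of $S$, and since gluing on a tree of $\bP^1$'s commutes with the component‑gluing map of Proposition~\ref{quotient}, the comparison $\bigoplus_G H^{\BM}_\bdot(S(G))\to\bigoplus_G H^{\BM}_\bdot(\tilde S(G))$ is a morphism of $\BT\op$‑modules.

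The remaining point is that this $\BT\op$‑action factors through $\FSop$, i.e.\ that two forests $F_1,F_2\in\BT([m],[n])$ inducing the same surjection $h$ act identically on $E^1$; by the previous paragraph this propagates to every page, every differential, and the comparison map, giving the asserted $\FSop$‑structures. Here is where the coarse stratification is essential: collapsing the grafted tree recovers a graph depending only on $h$, so $F_1^*$ and $F_2^*$ induce the \emph{same} poset map $\rQ(g,n)\to\rQ(g,m)$ and land in the same stratum $S(h^*G)$ — for the finer stratification by stable graphs they would land in distinct strata and no identification on $E^1$ would be possible. To compare them on $H^{\BM}_\bdot(S(h^*G))$, I would first work on $\tilde S$: gluing a tree at a marked point lying on the component of $\tilde S(G)$ indexed by a vertex $v$ changes only the factor indexed by $v$, and there is a ``universal gluing'' morphism
\[
\Phi\colon\ \tilde S(G)\times\prod_{i\in[n]}\widebar\cM_{0,\#h^{-1}(i)+1}\ \longrightarrow\ \tilde S(h^*G)
\]
which is a product of: isomorphisms on the unaffected factors; for an affected genus‑$0$ vertex, the boundary gluing map $\widebar\cM_{e(v),n(v),0}\times\widebar\cM_{0,\#h^{-1}(i)+1}\to\widebar\cM_{e(v),\,n(v)-1+\#h^{-1}(i),\,0}$; and for an affected genus‑$\ge1$ vertex, an isomorphism onto $\cM_{g(v),n(v)}\times\widebar\cM_{0,\#h^{-1}(i)+1}$. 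Each factor is a map of proper stacks or an isomorphism, so $\Phi$ is proper, and $F_j^*$ is $\Phi$ precomposed with the closed point $\iota_{F_j}\colon\Spec\bC\to\prod_i\widebar\cM_{0,\#h^{-1}(i)+1}$ cut out by the trees of $F_j$. By the Künneth theorem with $\bQ$‑coefficients (each $\widebar\cM_{0,k}$ is smooth and proper), $(\iota_{F_j})_*$ sends $x$ to $x\otimes[\mathrm{pt}]$, and $[\mathrm{pt}]\in H_0\bigl(\prod_i\widebar\cM_{0,\#h^{-1}(i)+1}\bigr)$ is independent of $j$ because the product is connected; pushing forward along the proper map $\Phi$ gives $F_1^*=F_2^*$ on $H^{\BM}_\bdot(\tilde S(G))$. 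Finally, since $S(G)=\tilde S(G)/A_G$, the comparison map identifies $H^{\BM}_\bdot(S(G))$ with the $A_G$‑invariants of $H^{\BM}_\bdot(\tilde S(G))$ over $\bQ$, is injective, and is $\BT\op$‑equivariant, so the equality descends to $H^{\BM}_\bdot(S(G))$, as needed.

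The main obstacle I anticipate is organizational rather than conceptual, and it lives in the first step: because the gluing maps carry open strata into boundary strata, they respect no dimension filtration, so the spectral sequence must be built from the coarse poset filtration by the $\overline{S(G)}$'s, and one must verify that compatible linear refinements of the posets $\rQ(g,n)$ can be chosen (or, equivalently, argue via the canonicity of the associated graded) so that the whole construction is functorial in $F$. The genuinely new input — the second step — is a direct adaptation of the connectedness argument already used to pass from $\BT\op$ to $\FSop$ on $H_\bdot(\bMgn)$; the only twist is that, the strata not being proper, the argument is routed through the proper universal gluing $\Phi$ and then transferred across the finite quotient $\tilde S(G)\to S(G)$. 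The points of diligence are the Künneth formula and the identification $H^{\BM}_\bdot(X/A)=H^{\BM}_\bdot(X)^A$ for Borel--Moore homology of these possibly singular, non‑proper Deligne--Mumford stacks, both valid with rational coefficients.
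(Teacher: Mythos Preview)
Your proposal is correct and follows essentially the same route as the paper's proof: first observe that the $\BT\op$ gluing maps are closed embeddings preserving the $\rQ(g,n)$ stratification and hence induce maps of spectral sequences, and then factor through $\FSop$ by embedding each tree map into a universal gluing $\tilde S(G)\times\widebar\cM_{0,k}\to\tilde S(h^*G)$ and using that $\widebar\cM_{0,k}$ is connected (equivalently $H_0^{\BM}=\bQ$). The paper's proof is three sentences and leaves implicit exactly the organizational points you spell out (linear refinements, K\"unneth, descent along $\tilde S(G)\to S(G)$); your version is a faithful expansion of it.
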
%https://arxiv.org/pdf/alg-geom/9506023.pdf
	   		\begin{proof}
	   			  By construction, the action of $\BT \op$ preserves the stratification and acts via proper maps. Thus it induces maps of spectral sequences.  We show that on Borel--Moore homology,  the action factors through $\FSop$. Given a forest $F \in \BT(n,m)$ the map $\rQ(g,m) \to \rQ(g,n)$ only depends on $h_F \in \FS(n,m)$.  Given a surjection $h : [n] \to [m]$, there is a family of gluing maps $S(G) \times \prod_{i, h\inv(i) > 1}  \overline \cM_{0, \#h\inv(i) + 1} \to S(h^* G)$.   Each forest $F$ with $h_F = h$ corresponds to an element of $\overline \cM_{0, \#h\inv(i) + 1}$, such that the gluing map $F^*$ is obtained by evaluating at that element.  Since $\bMon$ is connected, every forest induces the same map on Borel-Moore homology.   

The map $H_i^{\BM}(\tilde S(G)) \to H_i^{\BM}(S(G))$ is a surjection, because by Proposition \ref{quotient} we have that $H_i^{\BM}( S(G)) = H_i^{\BM}(\tilde S(G))/A_G$.
	   		\end{proof}

\section{Bounding Graphs}	\label{graphcombinatorics}
	
	The two lemmas in this subsection are combinatorial.  The first says that as $n \to \infty$ any stable graph $G \in \rQ(g,n)$  must either contain a genus $\geq 1$ vertex of high valence,  or a genus $0$ vertex with many external edges.  
%If G \in Q(g,n)  the number of external edges is large (n >> 0),  the genus of the graph is bounded,  then all of the edges have to be concentrated on the genus 0 vertices

	\begin{lem}\label{finitelymanygraphs}
			Let  $f(i,e,a)$ be a linear function  $f(i,e,a) = r i + s e + t a + u$  with  $r,s,t \geq 0$.  
 If $G$ is a stable genus $g$ graph such that:
				\begin{enumerate}
					\item There are no edges between distinct genus $0$ vertices
					\item Every genus $\geq 1$ vertex has  valence  $\leq i +1$
					\item  Every genus $0$ vertex with $\leq e$ self edges and $\leq a$ edges to other vertices, has  $ \leq f(i,e,a)$ external edges.  \end{enumerate}
			Then the total number of external edges of $G$ is bounded by: $$n(G) \leq  (i+1) g +   (i+1) g f(i, g, (i+1)g).$$ % 9 g^2 i^2 + 25 g^2 i + 16 g^2 + 11 g i^2 + 7 g i - 4 g
	\end{lem}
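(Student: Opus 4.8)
The plan is to count the external edges of $G$ vertex by vertex, after partitioning the internal vertices into the set $V_+$ of vertices with $g(v)\ge 1$ and the set $V_0$ of genus $0$ vertices, and bounding $|V_+|$, $|V_0|$, and the local invariants $e(v),a(v)$ for each $v\in V_0$. The two visible ``degrees of freedom'' in the bound, namely the factors $(i+1)g$, will come from $|V_+|\le g$ together with the valence hypothesis (2).

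First I would control $V_+$. The genus identity $\dim H^1(G)+\sum_v g(v)=g$ gives $\sum_v g(v)\le g$, hence $|V_+|\le g$, and so $\sum_{v\in V_+}n(v)\le (i+1)g$ by hypothesis (2). Since the external edges at $V_+$ and the edges from $V_+$ into $V_0$ are distinct edges incident to $V_+$, each of these two counts is at most $\sum_{v\in V_+}n(v)\le (i+1)g$. Next I would bound $|V_0|$: by hypothesis (1) every non-self edge at a vertex of $V_0$ goes to $V_+$, so $a(v)$ counts edges from $v$ into $V_+$. If some $v\in V_0$ had $a(v)=0$ then $v$ would meet no internal vertex, so by connectedness $G$ would be the single vertex $v$ with $e(v)=\dim H^1(G)=g$ self-edges and $n(G)\le f(i,g,0)$, which is dominated by the claimed bound when $g\ge 1$ (and $g=0$ is the trivial one-vertex graph). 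Otherwise $a(v)\ge 1$ for every $v\in V_0$, so $|V_0|\le\sum_{v\in V_0}a(v)=\#\{\text{edges between }V_0\text{ and }V_+\}\le (i+1)g$. For the local data: each self-edge contributes $1$ to $\dim H^1(G)$, so $e(v)\le\sum_w e(w)\le\dim H^1(G)\le g$, and $a(v)\le\#\{\text{edges between }V_0\text{ and }V_+\}\le (i+1)g$.

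To assemble: since $f(i,e,a)=ri+se+ta+u$ with $s,t\ge 0$ is nondecreasing in $e$ and in $a$, hypothesis (3) bounds the number of external edges at any $v\in V_0$ by $f(i,g,(i+1)g)$. Every external edge is incident to exactly one internal vertex, so
$$n(G)\ \le\ (i+1)g\ +\ |V_0|\cdot f(i,g,(i+1)g)\ \le\ (i+1)g + (i+1)g\,f\bigl(i,g,(i+1)g\bigr),$$
where the first summand bounds the external edges at $V_+$ and the second those at $V_0$.

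I expect the only step needing genuine care to be the bound $|V_0|\le (i+1)g$: it rests on using connectedness of $G$ to force each genus $0$ vertex to attach to a positive-genus vertex, and on isolating the degenerate single-vertex graph. Everything else is direct substitution of the genus identity $\dim H^1(G)+\sum_v g(v)=g$, the valence hypothesis (2), and monotonicity of $f$.
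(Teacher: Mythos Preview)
Your argument is correct and mirrors the paper's proof: both bound $|V_+|\le g$ via the genus identity, use hypothesis (2) to get at most $(i+1)g$ half-edges at $V_+$ (hence at most $(i+1)g$ genus $0$ vertices and $a(v)\le (i+1)g$), bound $e(v)\le g$, invoke monotonicity of $f$, and treat the single-vertex case separately. Your write-up is in fact slightly more careful than the paper's in isolating the degenerate case and in making explicit the inequality $|V_0|\le\sum_{v\in V_0}a(v)$.
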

	\begin{proof}
		 For a vertex $v$,  let $e(v)$ be the number of self edges and  $a(v)$ be the number of edges to a distinct vertex.

		 Since every external edge must be adjacent to at least one vertex that is either genus $0$ or genus $\geq 1$,  by the hypotheses $(2)$ and $(3)$ we have $$n(G) \leq \sum_{v \in G, g(v) \geq 1}  (i+1)  + \sum_{v \in G,  ~g(v) = 0}   f(i,e(v),a(v)). $$  
	The first term of the sum is $\leq (i+1)g$, because there are at most $g$ vertices of genus $\geq 1$.

Next we bound the second term.  Note that  for every genus $0$ vertex $v$, we have $e(v) \leq g$,  because the genus of $G$ is  $g$.  We also have $a(v) \leq (i+1) g$,  because any genus $0$ vertex is adjacent to a vertex of genus $\geq 1$.    Thus $f(i,e(v), a(v)) \leq f(i, g, (i+1)g)$.  
		 
		 Now either $G$ has only one genus $0$ vertex,  or every  genus $0$ vertex  must be adjacent to a genus $\geq 1$ vertex.    In the first case $n(G) \leq f(i, g, 0)$,  and in the second we have  $$ \# \{\text{  genus $0$ vertices }\} \leq  (i+1) g $$  So either $n(G) \leq f(i,e,0)$,  or $$n(G) \leq  (i+1) g +  (i+1) g f(i, g, (i+1)g) ,$$ which proves the claim.  
	\end{proof}

		The next lemma concerns stable graphs $H$ containing only genus $0$ vertices.  Roughly, it says that as $n(H) \to \infty$,  either $\dim S(H)$ becomes large, or the number of trivalent vertices of $H$ grows  and consequently $H$ is forced to contain certain subgraphs.

	\begin{lem}\label{forcing}
		Let  $J$ be the stable graph consisting of a single genus $0$ vertex,  $e$ self edges, and $n-e$ labelled external edges.    Assume we are given a partition of  $[n-e]$ into sets $A$ and $B$ of size $a$ and $b$ respectively.   If $b >  13a + 16i + 8e - 7$, then every stable graph $H$ with $\widebar H = J$  satisfies  at least one of the following:
		\begin{enumerate}
			\item[(i)] The sum $\sum_{v \in H}  (n(v) - 3) $ is $> i$ 
			\item[(ii)] There is a trivalent vertex $v$ adjacent to $2$ external edges in $B$
			\item[(iii)] There  are two adjacent trivalent vertices $v, v'$  such that both $v$ and $v'$ are adjacent to external edges in $B$.
		\end{enumerate}
	\end{lem}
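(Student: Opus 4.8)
The plan is to prove the contrapositive: assuming that none of (1), (2), (3) holds, I will bound $b$ above by $13a+16i+8e-7$. So suppose $\sum_{v\in H}(n(v)-3)\le i$, that no trivalent vertex of $H$ carries two external edges in $[b]$, and that no two adjacent trivalent vertices of $H$ each carry an external edge in $[b]$. First I would unwind $\bar H=J$: since $J$ is a single genus $0$ vertex and passing from $H$ to $\bar H$ only contracts edges between genus $0$ vertices, $H$ has no positive-genus vertices, so $H$ is a connected stable graph all of whose vertices have genus $0$ --- hence every vertex has valence $\ge 3$ --- whose core $H^\circ$ (obtained by deleting all external edges) is connected with $\dim H^1(H^\circ)=e$, and which carries $a+b$ external edges split between $[a]$ and $[b]$. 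If $H$ has a single vertex then $H=J$ and $\sum_v(n(v)-3)\le i$ forces the bound directly; so I may assume $H$ has $\ge 2$ vertices, and then every vertex has valence $d(v)\ge 1$ in $H^\circ$ (where I write $d(v)=n(v)-(\text{number of external edges at }v)$).

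Next I would set up the counting. A half-edge count (using that $H^\circ$ is connected with first Betti number $e$, so it has $|V(H)|-1+e$ edges) gives $\sum_v(n(v)-3)=a+b+2e-2-|V(H)|$, so the failure of (1) reads $|V(H)|\ge a+b+2e-2-i$; it therefore suffices to bound $|V(H)|$ above in terms of $a,i,e$ plus a controlled correction. I classify $v$ as a \emph{leaf}, \emph{chain vertex}, or \emph{branch vertex} according as $d(v)$ is $1$, $2$, or $\ge 3$. Stability makes a leaf have $\ge 2$ external edges; if none lies in $[a]$ they all lie in $[b]$, and if there are exactly two the leaf is a trivalent vertex with two $[b]$-edges, contrary to assumption, so such a leaf has $\ge 3$ external edges and $n(v)-3\ge 1$ --- hence there are $\le a+i$ leaves. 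From $\sum_v(d(v)-2)=2e-2$ in $H^\circ$ and the fact that each branch vertex contributes $\ge 1$ to the left side, the number of branch vertices is at most (number of leaves)$\,+2e-2\le a+i+2e-2$. Finally a chain vertex has $\ge 1$ external edge; if it has $\ge 2$ then $n(v)-3\ge 1$, if exactly one lying in $[a]$ there are $\le a$ such, and the rest --- trivalent, with $d(v)=2$ and a single external edge lying in $[b]$ --- form a set $X$, giving $\le a+i+|X|$ chain vertices. Adding up, $|V(H)|\le 3a+3i+2e-2+|X|$, and combined with $|V(H)|\ge a+b+2e-2-i$ this gives
\[ b\ \le\ 2a+4i+|X|. \]

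It remains to bound $|X|$, and this is where the failure of (3) enters; I expect it to be the main obstacle. Each $x\in X$ is trivalent with an external edge in $[b]$ and has $d(x)=2$; since $|V(H)|\ge 2$ it has no self-loop, so it is joined to $H$ by two distinct half-edges, and the failure of (3) forces both of the corresponding neighbours to lie in $V(H)\setminus X$. Hence, counting in the core $H^\circ$, the set $X$ accounts for $2|X|$ half-edge-ends attached to vertices of $V(H)\setminus X$, so $2|X|\le\sum_{w\in V(H)\setminus X}d(w)\le\sum_{w\in V(H)\setminus X}n(w)=3\,|V(H)\setminus X|+\sum_{w\in V(H)\setminus X}(n(w)-3)\le 3\,|V(H)\setminus X|+i$; and $|V(H)\setminus X|=|V(H)|-|X|\le 3a+3i+2e-2$ by the previous step. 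Therefore $2|X|\le 9a+10i+6e-6$, and $b\le 2a+4i+|X|\le\tfrac{13}{2}a+9i+3e-3$, which is $\le 13a+16i+8e-7$ unless $a=i=e=0$ --- and in that case the same chain of inequalities already gives the impossible $b\le -3$, so there too one of (1)--(3) must in fact hold. The substance of the argument is the realization that the failure of (3) is precisely what makes $X$ an independent set, combined with the observation that the bound on $|V(H)\setminus X|$ needed to close the loop is exactly the one proved in the counting step; tracking the precise constants is then just bookkeeping.
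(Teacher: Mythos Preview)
Your proof is correct and in fact yields a sharper inequality ($b \le \tfrac{13}{2}a + 9i + 3e - 3$) than the one asserted. One small oversight: in the single-vertex case with $a=i=e=0$, the failure of (1) alone gives only $b \le 3$, not the target $b \le -7$; but then the unique vertex is trivalent with all its external edges in $[b]$, so (2) holds and the contrapositive hypothesis is vacuous there. Your closing remark already handles the analogous $a=i=e=0$ edge case in the multi-vertex argument correctly.

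Your route differs from the paper's in how the counting is organized. The paper classifies vertices by valence ($>3$ versus $=3$) and by $(a(v),b(v))$, then bounds the trivalent vertices with $b(v)=1$ through a four-way case split: such a vertex must carry an $[a]$-edge, or be adjacent to a trivalent vertex with $b(v)=0$, or be adjacent to a vertex of valence $>3$, or be the unique vertex with a self-loop---the failure of (3) being what forces one of these alternatives. You instead classify by core degree $d(v)$ into leaves, chain vertices, and branch vertices, and use the failure of (3) in a single structural stroke: it makes your set $X$ of trivalent chain vertices carrying a $[b]$-edge independent in $H^\circ$, so $2|X|$ is bounded by the total core degree on $V(H)\setminus X$. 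This replaces the paper's case analysis by one observation and explains why you recover the lemma with constants to spare. The paper's argument has the virtue of being pure elementary counting with no auxiliary graph-theoretic language, but yours makes the role of condition (3) more transparent.
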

	\begin{proof}
	  Let  $H$ be a stable graph with $\widebar H = J$.  For a vertex $v \in H$, let $a(v)$, $b(v)$, and $e(v)$ denote the number of edges adjacent to $v$ that are respectively external in $A$, external in $B$, and self edges.  To prove the contrapostive, we assume that $H$ satsifies the following conditions:
		\begin{enumerate}
			\item  $\sum_{v \in H}  (n(v) - 3) \leq i, $  
			\item every trivalent vertex $v \in H$ has $b(v) \leq 1$
			 \item there are no two adjacent trivalent vertices $v, v' \in H$   with $b(v), b(v') \geq 1$.  
		\end{enumerate} 		
With these assumptions, we show that $b \leq   13a + 16i + 8e - 7$. 

	First, by $(1)$,  $H$ has $\leq i$  vertices of valence $> 3$.

	 Next by retracting external edges and computing the  Euler characteristic of $H$ in two different ways, we have  $$ e - 1 = - \chi(H)=   \sum_{v \in H} ( 1/2 ~\#\{ \text{non external edges of $v$}  \} - 1).$$  The first equality follows because $J$ is obtained from $H$ by contracting a tree.  The second equality is obtained by counting the number of edges minus the number of vertices.  

  Next, we have that the number of non external edges of $v$ is $n(v) - a(v) - b(v)$. Breaking up the sum by $n(v)$ and $a(v)$, and $b(v)$ we obtain:   
	  \[\begin{split} e-1 ~= ~& \sum_{v, ~n(v) > 3}  ( 1/2 ( n(v) - a(v) - b(v) ) - 1) ~~ +   \sum_{v, ~n(v) = 3, ~a(v) \geq  1}   ( 1/2 ( n(v) - a(v) - b(v) ) - 1) \\ +~ &\sum_{v, ~n(v) = 3, ~a(v) = 0, ~b(v) = 1}  0  ~~ +  \sum_{v, ~n(v) = 3, ~a(v) = 0,~ b(v) = 0}  1/2.    \end{split} \] Here we used $(2)$ to restrict the sum to the cases shown.   From the bound on vertices of valence $> 3$, and the fact that $1/2(n(v) - a(v) - b(v) - 1) \geq -1$,  we have:     \[ e-1 \geq -i  - a + 0 ~+~   \sum_{v, ~n(v) = 3,~ a(v) = 0,~ b(v) = 0} 1/2.\] Rearranging this inequality we see \[ \#\{\text{trivalent  vertices $v$,  such that $a(v) = b(v) = 0$}\} \leq 2(i +a + e - 1). \]
	   Since there are at most $a$ trivalent vertices with $a(v) > 0$, there  are at most $\leq 2(i +a + e -1) +a$  trivalent vertices with $b(v) = 0$.   
	    
	    By $(3)$ every trivalent vertex with $b(v) = 1$ satisfies at least one of the following: \begin{itemize} \item[(I):] $v$ has an edge in $A$ \item[(II):]$v$ is adjacent to a trivalent vertex with $b(v) = 0$ \item[(III):] $v$  is adjacent to a  vertex of valence $> 3$ \item[(IV):] $v$ has a self edge and is the only vertex of $H$.  \end{itemize}
We have the following bounds on the number of trivalent vertices satsifying each condition.
\begin{itemize} \item[(I):] $\leq a$ \item[(II):] $\leq  3 \#\{ \text{ genus } 0 \text{ trivalent vertices with } b(v) = 0\}  \leq 3( 2(i +a + e -1) +a)$ \item[(III):] $$\leq \sum_{v, ~ n(v) > 3}  n(v) \leq  i + 3~ \#\{\text{ $> 3$-valent vertices}\} \leq i + 3i $$ \item[(IV):] $\leq 1$, \end{itemize}
where for case (III)  we have rearranged (1) and used the bound on vertices of valence $>3$.  Combining these bounds,  we have 
	    \[ \#\{\text{trivalent vertices with $b(v) = 1$ } \} \leq a+  3(2(i +a + e - 1)+a) + 4i  + 1 = 10 a + 10 i + 6e - 5.\]
	Thus, in total,  the trivalent vertices contribute at most $13 a + 12 i + 8 e - 7$ to $b$.
	
	    Lastly  we have $$\sum_{v,~n(v) > 3}  b(v) \leq  \sum_{v,~n(v) > 3}  n(v)   \leq i + 3i,$$ as we reasoned in case (III) above.  So the vertices of valence $> 3$ contribute at most  $4i$ to $b$.   In total we see that $b \leq   13a + 16i + 8e - 7$, completing the proof.   
	\end{proof}

	\section{Preliminaries on homology}

Recall that  $\overline{\cM}_{e,n,0}$ is the moduli space parameterizing stable curves of genus $e$ with $n$ marked points such that every irreducible component has genus $0$.  Given a stable graph $J \in \Stab(e,n)$ such that $ \overline J \in \rQ(e,n)$  is a wedge of circles,  we write $\cM_J$ for the corresponding stratum of $\overline{\cM}_{e,n,0}$  and $\overline{\cM}_J$ for its closure.

\begin{defn}
	Let $J_1,J_2 \in \Stab(e,n)$ be two stable graphs with no vertices of genus $>1$.  We say that $J_1$ and $J_2$ differ by a \emph{WDVV exchange} if there is a subgraph  $H \subset J_1$, and an identification of $H$ with the graph
\begin{center}\includegraphics[scale = 1.5]{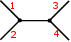},
\end{center}
 such that $J_2$ may be obtained from $J_1$ by replacing $H$ by the subgraph
\begin{center}
 \includegraphics[scale = 1.5]{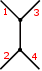}.
 \end{center}
\end{defn}

\begin{ex}The following two graphs differ by a WDVV exchange, applied to the subgraph $H$, indicated by the labelings.  
\begin{center}\includegraphics[scale = 1.5]{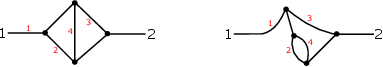}  \end{center}
\end{ex}

The variety $\overline \cM_{0,4}$ is isomorphic to $\bP^1$ via the map that takes $4$ points to their cross ratio. Therefore the three boundary strata of $\overline \cM_{0,4}$ all have the same class in $H_0(\overline \cM_{0,4})$.   This fact is called the WDVV relation.   It is well known that the WDVV relation, combined with gluing morphisms, yields relations between other classes in $\overline \cM_{g,n}$.  The next proposition records the  WDVV relations that we use.  

\begin{prop}[WDVV relations]\label{WDVV}
	Let $J_1,J_2 \in \Stab(e,n)$ be stable graphs with no vertices of genus $> 1$.  Suppose that $J_1$ and $J_2$ differ by a WDVV exchange.  Then the fundamental classes $[\overline \cM_{J_1}], [\overline \cM_{J_2}] \in H_*(\cM_{e,n,0})$ are proportional by a nonzero constant $\lambda \in \bQ^*$.      
\end{prop}
\begin{proof}
Let $J_1- H$ be the graph obtained by removing the $2$ vertices of $H$ and the edge connecting them. Write $J_1 - H$ as a disjoint union of connected stable graphs $\{ K_{i}\}_{i \in I}$ of genus $g_i$ with $n_i$ external edges.  Then there is a gluing map  \[g : \overline{\cM}_{0,4} \times \bigsqcup_{i} \overline \cM_{g_i, n_i} \to \overline\cM_{e,n,0},\]  such that when resticted to the strata of $\overline \cM_{0,4}$ corresponding to the  graphs  
\begin{center}
 \includegraphics[scale = 1.5]{WDVV1} \hspace{.5in}  and \hspace{.5in}  \includegraphics[scale = 1.5]{WDVV2},
 \end{center}
we obtain maps that are finite onto $\overline \cM_{J_1}$ and $\overline \cM_{J_2}$ respectively.   Since  $\overline \cM_{0,4}$ is connected,  the image of the fundamental class of $\bigsqcup_{i} \overline \cM_{g_i, n_i}$ is a nonzero multiple of both $[\overline \cM_{J_1}]$ and $[\overline \cM_{J_2}]$.  
\end{proof}

We recall Deligne's theory of weights  \cite{deligne1974poids,deligne1971theorie},  using the canonical isomorphism $H_j^{\BM}(Y) = H^j_{c}(Y)^* $ to state them in terms of Borel--Moore homology.    For any algebraic variety $Y$, the Borel--Moore homology $H_{j}^{\BM}(Y)$ carries a filtration $$W_{-j} H_j^{\BM}(Y) \subseteq \dots \subseteq W_{0} H_j^{\BM}(Y),$$ which is compatible with with proper pushforwards and the K\"unneth formula.   

\begin{defn}The pure Borel-Moore homology of $Y$ is the subgroup $$\rP H_*^{\BM}(Y):= \bigoplus_{j \geq 0} \rW_{-j} H_{j}^{\BM}(Y) \subseteq H_*^{\BM}(Y).$$  
\end{defn}

Note that the pure Borel--Moore homology equals the pure homology when $Y$ is compact.    The K\"unneth formula for the weight filtration implies that $$\rP H_i(X \times Y) = \bigoplus_{j+k = i} \rP H_j(X) \otimes \rP H_k(Y).$$  

\begin{prop}\label{fundgen}
 	The pure homology of $\overline \cM_{e,n,0}$ is spanned by the fundamental classes of strata $[\overline{\cM}_J]$.   
\end{prop}
\begin{proof}
 Given a curve $C \in \overline \cM_{0,n+2e}$, define $f(C)$ be the curve obtained by quotienting by the relation that identifies the the $n+i$th marked point of $C$ with the $n + e + i$th marked point of $C$ for every $i \in  \{1, \dots, e\}$.  For every $j \in \{1, \dots, n\}$, we define the $j^{\rm th}$ marked point of $f(C)$ to be the image of the $j^{\rm th}$ marked point of  $C$ under the quotient map $C \to f(C)$.   Because $f(C)$ has genus $e$ and every irreducible component has genus $0$, we have defined a point $f(C) \in  \overline \cM_{e,n+e,0}$ .  Since the construction $C \mapsto f(C)$ may be carried out in families of curves over an arbitrary base,  it corresponds to a gluing map $f: \overline \cM_{0,n + 2 e} \to \overline \cM_{e,n,0}$.  

The map $f$ is surjective between projective varieties.  Therefore by Lemma A.4 of \cite{lewis2016survey} the induced map on pure homology is surjective.  %, proper, and has finite fibers, hence it is finite.   Thus on the underlying topological spaces, $f$ is a $d$-fold ramified cover in the sense of Smith \cite{smith1983transfer}, for $d = e! 2^e$.  Therefore by  Proposition 2.2. of \cite{smith1983transfer} the induced map $H_i({f}) = H_i^{\rm BM}({f})$  is surjective.

Now, $\rW_{-2i}H_{2i}(\overline\cM_{0,n + 2 e})$, is generated by the fundamental classes of $i$-dimensional stratum closures, $\overline \cM_J$, as can be shown using Hodge theory, see Getzler \cite{getzler1995operads}.  The homology in odd degree vanishes.   For a stable graph $J \in \Stab(0,n + 2e)$,  let ${f}(J) \in \Stab(e, n)$ denote the graph obtained by gluing the external vertices of $J$ as in the definition of ${f}$. Let $\cM_{{f}(J)}$ be the corresponding stratum of $\overline \cM_{e,n,0}$ and let $\overline \cM_{{f}({J})}$ be its closure.   Then $\cM_{f(J)}$ is the image of $\cM_{J}$, so $f([\cM_{J}])$ is a multiple of $[\cM_{f(J)}]$ and so the fundamental classes of strata generate the pure homology of $\overline \cM_{e,n,0}$.  
\end{proof}

Before proving finite generation,  we record the following vanishing statement.
	\begin{prop}\label{noclasses}
	 	Let $g, n \geq 1$.  Then $H_i^{\rB \rM}(\cM_{g,n}) = 0$ for  $i < n -1$.  
	\end{prop}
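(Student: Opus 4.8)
\emph{Proof proposal.} The plan is to prove, for $g \geq 1$ and all $n \geq 1$, the slightly stronger statement that $H^{\BM}_q(\cM_{g,n}; L) = 0$ for $q < n-1$ and \emph{every} local system $L$ of finite-dimensional $\bQ$-vector spaces on $\cM_{g,n}$; the proposition is the case $L = \bQ$. Only $g \geq 1$ is needed in the sequel, since the factors $\cM_{g(v),n(v)}$ occurring in $\widetilde S(G)$ all have $g(v)\geq 1$; in genus $0$ the displayed bound should instead read $q < n-3$ (e.g. $H^{\BM}_1(\cM_{0,4}) = \bQ^2$). Because we use $\bQ$-coefficients we may work with the Deligne--Mumford stacks, on which the forgetful morphism $\pi\colon \cM_{g,n}\to\cM_{g,n-1}$ (forgetting the last marked point) is the universal punctured curve, a smooth representable morphism; equivalently one passes to the finite covers given by moduli with level structure, on which $\pi$ is an honest fiber bundle of manifolds.

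The one geometric input is an observation about open curves: if $U$ is a connected non-compact Riemann surface, then $H^{\BM}_q(U;L)=0$ for $q\leq 0$ and every local system $L$. Indeed $U$ is an oriented real $2$-manifold, so Poincaré duality gives $H^{\BM}_q(U;L)\cong H^{2-q}(U;L)$; and a non-compact surface is homotopy equivalent to a $1$-dimensional CW complex, so $H^k(U;L)=0$ for $k\geq 2$, which we apply with $k=2-q$.

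Now I would induct on $n$. For $n=1$ there is nothing to prove, since Borel--Moore homology vanishes in negative degrees. For $n\geq 2$, the fiber of $\pi$ over $[C;p_1,\dots,p_{n-1}]$ is the punctured curve $C\setminus\{p_1,\dots,p_{n-1}\}$, which is non-compact because $n-1\geq 1$. The Leray spectral sequence of $\pi$ in Borel--Moore homology reads
\[ E^2_{p,q} = H^{\BM}_p\big(\cM_{g,n-1};\,\mathcal H_q\big) \Longrightarrow H^{\BM}_{p+q}(\cM_{g,n};L), \]
where $\mathcal H_q$ is the local system on $\cM_{g,n-1}$ whose stalk at $b$ is $H^{\BM}_q$ of the fiber over $b$, with coefficients restricted from $L$. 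By the observation $\mathcal H_q=0$ for $q\leq 0$, and by the inductive hypothesis applied to $\cM_{g,n-1}$ with coefficients in $\mathcal H_q$ we get $E^2_{p,q}=0$ for $p<n-2$. Hence $E^2_{p,q}$ can be nonzero only when $p\geq n-2$ and $q\geq 1$, forcing $p+q\geq n-1$, and the claim follows.

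The main thing to be careful about is bookkeeping rather than geometry: the monodromy local system $\mathcal H_1$ (the action of the mapping class group on $H_1$ of a punctured curve) is genuinely nontrivial, so the induction must be run for all $L$, and one should make sure the Leray spectral sequence in Borel--Moore homology is legitimate --- which it is, $\pi$ being a locally trivial fibration (a fiber bundle of manifolds after a level cover). Everything else is formal; and the identical argument, fibering $\cM_{0,n}$ over $\cM_{0,3}=\mathrm{pt}$, yields $H^{\BM}_q(\cM_{0,n})=0$ for $q<n-3$.
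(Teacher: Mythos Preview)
Your argument is correct, and it takes a genuinely different route from the paper's.

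The paper fibers $\cM_{g,n}$ over $\cM_{g,1}$ in one step, so the fiber is the ordered configuration space $\Conf_{n-1}(\Sigma_g\setminus\{p\})$, and then invokes Getzler's (or Petersen's) spectral sequence to see that $H^i_c$ of this configuration space vanishes below degree $n-1$; the conclusion follows from the Leray sequence in compactly supported cohomology and duality with Borel--Moore homology.  Your proof instead peels off one marked point at a time via $\cM_{g,n}\to\cM_{g,n-1}$, reducing the needed input to the elementary fact that a non-compact Riemann surface has $H^{\BM}_q(-;L)=0$ for $q\le 0$, and then inducts.  Strengthening the statement to all local systems $L$ so that the induction goes through is the right move, and your caveat about $g=0$ (where the correct bound is $q<n-3$, and only $g\ge 1$ is used in the paper) is well observed.

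What each approach buys: the paper's is shorter on the page but outsources the work to an external spectral sequence for configuration spaces; yours is completely self-contained and in effect re-derives the relevant vanishing by unrolling the iterated fibration that underlies Getzler's spectral sequence in the first place.  The only point worth flagging is the existence of the Leray--Serre spectral sequence in Borel--Moore homology with local coefficients for a fiber bundle; you address this adequately by passing to level covers (so that $\pi$ is an honest smooth fiber bundle of manifolds), after which it follows by Poincar\'e duality from the usual cohomological Leray--Serre.
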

	\begin{proof}
		We show $H^i_c(\cM_{g,n}) = 0$ for $i < n-1$.  Consider the map  $f: \Mgn \to \cM_{g,1}$ which forgets the last $n-1$ points.  By the Leray spectral sequence for compact support  and base change, it suffices to show that $H^i_c(f\inv (x))$  vanishes for  $i < n-1$.   The fibers are topologically isomorphic to $\Conf_{n-1} (\Sigma_g - p)/B$  where  $\Sigma_g - p$ is the genus $g$ surface with one point removed, and $B$ is a finite group.  By transfer, we have that $H^i_c(\Conf_{n-1}(\Sigma_g - p)/B) \subset  H^i_c(\Conf_{n-1}(\Sigma_g - p))$. Then vanishing of $H^i_c(f \inv(x))$  follows from  Getzler's \cite{getzler1999resolving} spectral sequence converging to the compactly supported cohomology of $\Conf_{n-1} (\Sigma_g - p)$,  or its generalization by Petersen \cite{petersen2017spectral}.  As a graded vector space, the $E_2$ page is $$\bigoplus_{0 \leq k \leq n-2} ( \bQ[-1]  \oplus \bQ^{\oplus {2g}} [-2]   )^{\otimes n -1 - k}   \otimes \bQ^{\oplus c_{n,k}}[-k],$$  where $c_{n,k}$ is a certain unsigned Stirling number.  Since the lowest degree term is in degree $n-1$ , the result follows.
	\end{proof}
 Proposition \ref{noclasses} may also be obtained from the virtual cohomological dimension of $\cM_{g,n}$,  determined by Harer \cite{harer1986virtual}.

	\section{Proof of Finite Generation} \label{proof}

	Throughout this section,  we let  $f(i,e,a) = 13a + 16i + 8e - 7$  and  $p(i,g) = (i+1) g +   (i+1) g f(i, g, (i+1)g)$.   Expanding,  we have $p(i,g) =  8 g^2 i^2 + 29 g^2 i + 16 g i^2 + 21 g^2 + 10 g i - 6 g$.

    We now show that the $\FS \op$ module  $n \mapsto H_i(\bMgn, \bQ)$ is  a subquotient of one that is finitely generated in degrees $\leq p(i,g)$.  In particular, since Sam and Snowden  \cite{sam2017grobner} proved that submodules of finitely generated $\FSop$ modules are finitely generated, we see that the homology is finitely generated.  
	
	\begin{proof}[Proof of Theorem \ref{Mgnbar}]

	 		We use the Borel--Moore homology spectral sequence for the stratification of $\bMgn$ by $\rQ(g,n)$ defined in Section \ref{stratification}.   By Lemma 3.8 of \cite{arapura}, this spectral sequence is compatible with the weight filtration.   By Proposition \ref{compatible},  this is a spectral sequence of $\FSop$ modules and there is a a surjection of $\FSop$ modules  $$\bigoplus_{G \in \rQ(g,n)} H_\bdot^{\rB\rM}\left(  \tilde S(G)  \right) \to  \bigoplus_{G \in \rQ(g,n)} H_\bdot^{\rB\rM}\left(  S(G)  \right).$$ 

    The homology of $\bMgn$ is pure, since it is the quotient of a smooth proper variety by a finite group action \cite{boggi2000galois} (it is the coarse moduli space underlying a smooth and proper Deligne--Mumford stack).  Thus it suffices to show that the pure Borel--Moore homology of $\sqcup_{G} \tilde S(G)$ is generated in degree $\leq p(i,g)$.  

	Let $G \in \rQ(g,n)$ for $n > p(i,g)$, and let $c \in \rP H_i(\tilde S(G))$.  It suffices to show that the class $c$ is \emph{pushed forward from lower degrees}, in the sense that $c$ is a linear combination of classes of the form $f^*(d)$ for some surjection $f: [n] \to [n-1]$.

	 By Lemma \ref{finitelymanygraphs},  since $n(G) > p(i,g)$ and $G$ has no edges between distinct genus $0$ vertices,  one of the following must hold:\begin{enumerate} \item  $G$ has no genus $\geq 1$ vertices of valence $> i+1$, \item $G$ has no genus $0$ vertices with $e$ self edges,  $a$ edges to adjacent vertices and  $> f(i,e,a)$ external edges. \end{enumerate} 
	
Assume $(1)$ holds. Then by the K\"unneth formula we have that  \[  \rP H_i^{\rB\rM}(\widetilde S(G))= \bigoplus_{j:  G \to \bN, ~ \sum_v j(v) = i} \left( \bigotimes_{v \in G, ~g(v) = 0}    \rP H_{j(v)}^{\rB\rM}(\overline  \cM_{n(v), e(v), 0} )\otimes \bigotimes_{v \in G,~ g(v) \geq 1}  \rP H_{j(v)}^{\rB\rM}(\cM_{n(v), g(v)}) \right).\]    Since $G$ has a genus $\geq 1$ vertex with valence $> i+1$,  then by Proposition \ref{noclasses}  we have that $  H_{j}(\cM_{g(v), n(v)}) = 0$ for all $j \leq i$.  Thus $ \rP H_i^{\rB\rM}(\widetilde S(G)) = 0$  and so $c = 0$. Hence $c$ is pushed forward from lower degrees.  
	
		Now assume $(2)$ holds.  Let $w$ be a genus $0$ vertex of $G$ with $e$ self edges, $a$ edges to distinct vertices,  and $b > f(i,e,a)$  external edges.  We define $$X := \prod_{v \neq w, ~ g(v) = 0} \widebar \cM_{e(v),n(v), 0} \times    \prod_{v , ~ g(v) \geq 1} \overline \cM_{g(v), n(v)}.$$ By taking linear combinations, we may reduce to the case where $c$ is of the form  $c =c_w  \otimes c'$,  for $c_w \in \rP H_{j(w)}( \overline \cM_{e, a+b, 0})$ and $j(w) \leq i$  and  $ c' \in   \rP H_{i - j(w)}\left( X \right).$
	
 Let $J$ be the subgraph consisting of $w$ and edges adjacent to $w$.  By Proposition \ref{fundgen},  $\rP H_{j(w)}( \overline \cM_{e, a+b, 0})$  is spanned by fundamental classes of strata $[\overline{ \cM}_H]$ for stable graphs $H \in \Stab(g,n)$  with $\widebar H = J$ and $\dim \cM_H =  j(w) \leq i$.  
It suffices to show that $[\widebar{\cM}_H] \otimes c' = f^*(d)$ for some surjection $f:  [n] \to [n-1]$.     By Lemma \ref{forcing}, applied to the partition of the edges of $J$ into internal and external edges, at least  one of the following  holds for $H$:
		\begin{enumerate}
			\item[(I)] $\sum_{v \in H }  (n(v) - 3) $ is $> i$.  Since this sum is $\dim \cM_H$, this contradicts the assumption that $\dim \cM_{H} \leq i$.  \\
			\item[(II)] There  is a trivalent vertex $v \in H$ adjacent to two external edges, with labels $s,t \in [n]$.   Let $f$ be a surjection $[n] \to  [n-1]$  such that $f(s) = f(t)$. Note that $\BT(n,n-1) \to \FS(n,n-1)$ is a bijection, and let $F$ be the forest corresponding to $f$.  There is a unique $K \in \Stab(e,n-1)$ such that $F^*K = H$. The graph  $K$ is obtained by removing the edges $s,t$ from $H$  and relabelling according to $f$.   Similarly, there is a unique $L \in \rQ(g,n-1)$ such that $f^* L = G$. The graph $L$ obtained by removing one of the external edges adjacent to $w$, and relabeling according to $f$.   

Since $L$ only differs from $G$ at the vertex $w$, there is an identification $\tilde S(L) =\overline \cM_{e(w), n(w)-1,0} \times X.$   Thus we may consider  $[\overline \cM_K] \otimes c'$  to be a class on $\tilde S(L)$.  
Restricting to the subvariety $\overline \cM_K \times X$,  the gluing map $F^*: \tilde S(L) \to \tilde S( G)$ induces an isomorphism  $$\overline \cM_K \times X \to  \overline \cM_H \times X.$$ Thus we have that    $f^* ([\overline \cM_K] \otimes c') = [\overline \cM_H] \otimes c'$.  \\
			
			\item[(III)] There are two adjacent trivalent vertices $v_1, v_2$  in $H$, such that $v_1, v_2$ both have external edges. Identify the subgraph spanned by edges adjacent to $v_1, v_2$ with  \begin{center}\includegraphics[scale = 1.5]{WDVV1}.
\end{center}
Then, applying a WDVV exchange to $H$ using this identification, we obtain a graph $H'$  which has a trivalent vertex with two external edges.    Proposition \ref{WDVV} implies that  $[\widebar{ \cM}_H]  =  \lambda [\widebar{ \cM}_{H'}].$
  Then by case (II) applied to $H'$, we have that $[\cM_H]$  is pushed forward from lower degrees.  \\
		\end{enumerate}
	Therefore we have $[\overline \cM_H] \otimes c' = f^*(d)$ for some surjection $f:  [n] \to [n-1]$, completing the proof.
	\end{proof}

\begin{proof}[Proof of Theorem \ref{hilbert}]
		  The result follows from Theorem \ref{Mgnbar} and the extension of the results of Sam and Snowden recorded   Theorem 4.3 of \cite{proudfoot2017configuration}.
\end{proof}

\begin{proof}[Proof of Theorem \ref{stability}]
	 By Corollary 8.1.3 of Sam and Snowden \cite{sam2017grobner},  $\FSop$ is Noetherian.  Hence, by Theorem \ref{Mgnbar}, the $\FSop$ module $H_i(\cM_{g, -})$ is finitely presented, with presentation $$\bigoplus_{i \in I_1} \bbQ \FS(-,d_i) \to \bigoplus_{j \in I_0} \bbQ \FS(-,d_j),$$  where $\bbQ \FS(-,d)$ is the representable $\FSop$ module in degree $d$, defined by $n \mapsto \bbQ\FS(n,d)$,  the freeb vector space with basis the set of surjections $[n] \to [d]$.   Let $N = \max_{i \in I_1} (d_i)$.  Then we have that \[\Ind_N \Res_N H_i(\cM_{g,-}) = \coker\left(\bigoplus_{i \in I_1}  \Res_N\Ind_N \bbQ\FS(-,d_i) \to \bigoplus_{j \in I_0}  \Res_N \Ind_N \bbQ  \FS(-,d_j)\right). \]  Since the counit $\Ind_N \Res_N \bbQ \FS(-,d) \to \bbQ \FS(-,d)$ is an isomorphism for $d < N$, the result follows.  
\end{proof}

	\section{Further Questions}\label{furtherquestions}
	\subsection{Extensions of Theorem \ref{Mgnbar}}

		%Is the result of Theorem  \ref{Mgnbar}  true for integral coefficients?   %The main issue in extending our argument is that higher derived invariants  may shift the weight filtration.  
			
		From Theorem 
		 \ref{hilbert}, we see that there exists $M(g,i) \in \bN$  such that the function $n \mapsto \dim H_i(\bMgn)$ agrees with a sum of polynomials times exponentials for $n \geq M(g,i)$.  What is an effective bound on $M(g,i)$?  Equivalently, what is a bound on the degree of the numerator of the generating function $\sum_n \dim H_i(\cMgn) t^n$  in terms of $g$ and $i$?   %numerator of the generating function.
		
		We show that $H_i(\bMgn)$ is a subquotient of a module generated in degree $O(g^2 i^2)$.  We expect that this bound can be a linear bound in $g$ and $i$.  What is the optimal bound?
		
		What is an effective bound on the constant $N$ of Theorem \ref{stability}?
		
	\subsection{Representation theory of $\FSop$}	 Finitely generated $\FSop$ modules are not yet as well understood as finitely generated $\FI$ modules.  By Theorem \ref{Mgnbar}, any new  results on  finitely generated $\FSop$ modules will apply to $H_i(\bMgn)$. 
	
	 		Let $M_\lambda$ be an irreducible representation of $\bS_n$.  What is the decomposition of the projective $\FSop$ representation $M_\lambda \otimes_{\bS_n} \bbQ \FS(-,n)$ in degree $m$?  Equivalently, what is  the degree $m$ term in the plethysm of symmetric functions   $s_\lambda[\sum_{k \geq 1}  h_k]$? 
	
			The dimension functions of $\FI$ modules are eventually polynomial, and there has been much work towards constructing and studying invariants of $\FI$ modules that govern when this occurs.  (For recent applications see \cite{church2018linear}).  Which invariants of $\FSop$ modules govern when the dimension function of a finitely generated $\FSop$ module agrees with a sum of polynomials times exponential?   Which invariants control its presentation degree?

	\subsection{Other operadic actions on $H_\bdot(\bMgn)$}
		Operads give a conceptual interpretation of the $\FSop$ action on $H_i(\bMgn)$.  The homology of moduli space $\bigoplus_{g,n} H_\bdot(\bMgn)$ forms a (modular) operad, and $\bigoplus_{g,n} H_i(\bMgn)$  is a right module over the suboperad generated by $[\bar \cM_{0,3}]$.    This operad is isomorphic to the commutative operad.  Thus we obtain an right action of the commutative operad.  This action corresponds to a representation of its associated $PROP$, which is the category $\FSop$.   Using this approach, we can define other right actions of wiring categories on the homology of moduli space.  For example,  for any $j$, the operad generated by $[\bMgn]$   acts on $\bigoplus_{j,g,n,~  3g - 3 + n - j  = i  } H_j(\bMgn)$.  Is this right module finitely generated?  
		
		Similarly, as first observed by Kapranov--Manin \cite{kapranov2001modules}, for  $g$ fixed, the vector space \newline $\bigoplus_{i,n} H_i(\bMgn)$  is a right module over the \emph{hypercommutative operad}, the operad spanned by the fundamental classes of $\bMon$.  Is this right module finitely generated?
		
		For the answers to these questions to have concrete implications, we need to understand the structure of right modules over these operads (equivalently representation theory of their opposite wiring categories).  If a sequence of graded vector spaces is a finitely generated module over the hypercommutative operad,   is its Hilbert series constrained to have a particular form?

		Is the category $\bB\bT \op$, the opposite of the wiring category of the free operad on a binary commutative operation, Noetherian?  If so, it would be possible to simplify the proof of Theorem \ref{Mgnbar}.  In \cite{barter2015noetherianity},  Barter established the Noetherianity of a category of trees,  but we do not know of a relationship between this category and $\bB \bT \op$.

			\printbibliography

\end{document}